% This will be the manuscript we submit to a journal.

\documentclass[12pt,letterpaper]{amsart}

%% PACKAGES
\usepackage{amsmath,amssymb,amsthm}
\usepackage{fullpage}
\usepackage{verbatim}
\usepackage[noadjust]{cite} % to put citations in numerical order
\usepackage{hyperref}
\usepackage[capitalize, nameinlink]{cleveref}
\usepackage{enumitem}
\usepackage[usenames,dvipsnames]{xcolor} % for e.g. \todo color
\usepackage{tikz}
\usepackage[font=small,labelfont=bf, margin=.5in]{caption}
\usepackage{babel}
\usepackage{parskip, amsfonts, amsmath, changepage, amsthm, setspace, geometry, verbatim, MnSymbol,chemarrow, tikz}

\colorlet{prettygreen}{ForestGreen!60!LimeGreen}

\makeatletter
\def\@defaultbiblabelstyle#1{[#1]}
\makeatother

% Tikz
\usetikzlibrary{calc, arrows.meta}
\tikzset{vtx/.style={circle, fill, inner sep=1.5pt}}
\tikzset{openvtx/.style={circle, draw, inner sep=1.5pt}}

\usepackage{tikz-cd}

%% THEOREMS
\begin{comment}
\newtheorem{theorem}{Theorem}[section]
\newtheorem{lemma}[theorem]{Lemma}
\newtheorem{proposition}[theorem]{Proposition}

\newtheorem*{claim*}{Claim}

\theoremstyle{definition}
\newtheorem{definition}[theorem]{Definition}

\theoremstyle{remark}
\newtheorem{remark}[theorem]{Remark}

\crefname{claim}{Claim}{Claims}

\newlist{homtenum}{enumerate}{1}
\setlist[homtenum,1]{leftmargin=36pt}
\end{comment}

%% =========
%% FUNCTIONS
%% =========

\newcommand{\N}{{\mathcal N}}

\newcommand{\Z}{{\mathbb Z}}

%% TITLE/AUTHOR/ETC
\title{Four plane unit vectors generate a $3$-colorable graph}
\author{Katherine Eng}
\address{Department of Mathematics, California State University --- Los
Angeles}
\author{Timothy Harris}
\address{USC Keck School of Medicine, Department of Population and Public Health Sciences}
\email{harristd@usc.edu}
\author{Mike Krebs}
\address{Department of Mathematics, California State University --- Los
Angeles}
\email{mkrebs@calstatela.edu}
\author{Mason Meeks}
\email{mason.meeks@gmail.com}
\author{Claudia Maria Schmidt}
\address{Washington State University}
\email{schmidt.c.maria@gmail.com}

% \subjclass[2020]{05C25, 05C38}

%\date{\today}
\makeatletter
\let\mytitle\@title
\let\myauthor\@author
\makeatother

%%%%%%%%%%%%%%%%%%%%%%%%%%%%%%%%%%%%%%%%%%%%%%%%%%%%%%%%%%%%%%%%%%%%%%%%%
%%%%%%%%%%%%%%%%%%%%%%%%%%%%% THEOREM STYLE COMMEND %%%%%%%%%%%%%%%%%%%%%
%%%%%%%%%%%%%%%%%%%%%%%%%%%%%%%%%%%%%%%%%%%%%%%%%%%%%%%%%%%%%%%%%%%%%%%%%

\newtheorem{Thm}{Theorem}[section]
\newtheorem{Prop}[Thm]{Proposition}
\newtheorem{Lem}[Thm]{Lemma}
\newtheorem{Cor}[Thm]{Corollary}

\theoremstyle{definition}
\newtheorem{Def}[Thm]{Definition}
 
\theoremstyle{remark}
\newtheorem{Ex}[Thm]{Example}

%% END PREAMBLE

\begin{document}

\begin{abstract}We show that given an arbitrary set of four plane unit vectors $v_1, v_2, v_3, v_4$, the Cayley graph generated by $\{\pm v_1, \pm v_2, \pm v_3, \pm v_4\}$ is always $3$-colorable.  Indeed, we show that this is a specific case of a much more general result wherein we determine the chromatic number of an arbitrary abelian Cayley graph generated by a set of four elements and their negatives, subject to the constraint that the group of relations between those elements has rank no more than $2$.
\end{abstract}

\keywords{unit distance graph, chromatic number of the plane, Hadwiger-Nelson problem, abelian Cayley graph}

\maketitle

\section{Introduction}

The well-known chromatic number of the plane problem (a.k.a. Hadwiger-Nelson problem) asks for the smallest number $\chi(\mathbb{R}^2)$ of colors needed to assign every point in $\mathbb{R}^2$ a color such that no two points of distance $1$ from each other receive the same color.  As of the time of writing, it is known that $\chi(\mathbb{R}^2)$ equals either $5$, $6$, or $7$.  The book \cite{Soifer} provides a rich history of this subject.

Recall that a subset $S$ of a group is \emph{symmetric} if $x^{-1}\in S$ whenever $x\in S$.  For a group $G$ with a symmetric subset $S$, the Cayley graph $\text{Cay}(G,S)$ is the graph with vertex set $G$ such that $x$ and $y$ are adjacent if and only if $x^{-1}y\in S$.  The chromatic number of the plane problem can be seen as asking for the chromatic number of a Cayley graph whose underlying group is $\mathbb{R}^2$ under addition, with the unit circle as the generating set.

In this paper, we propose the following variation on the chromatic number of the plane problem.  Let $n$ be a positive integer.  Let $\chi_{\text{max}}(n)$ denote the maximum chromatic number of $\text{Cay}(G,S)$ as $S$ ranges over all sets of the form $\{\pm v_1,\dots,\pm v_n\}$ where $v_1,\dots,v_n$ are (not necessarily distinct) unit vectors in $\mathbb{R}^2$, and $G$ is the subgroup of $\mathbb{R}^2$ generated by $S$.  We ask: What is $\chi_{\text{max}}(n)$?

Later in this introduction we state the primary main theorem of this paper.  One consequence of that theorem, which we regard as a secondary main result of this paper, is a computation of $\chi_{\text{max}}(n)$ for $n=1,2,3,4$.  Specifically, we prove the following:\begin{Cor}\label{cor-main}
    We have that $\chi_{\text{max}}(1)=\chi_{\text{max}}(2)=2$ and $\chi_{\text{max}}(3)=\chi_{\text{max}}(4)=3$.
\end{Cor}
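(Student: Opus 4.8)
I would prove \Cref{cor-main} by establishing matching lower and upper bounds for each value $n\in\{1,2,3,4\}$, treating $n=1,2$ by hand and deducing $n=3,4$ from the primary main theorem. Two observations organize everything. First, $\chi_{\text{max}}$ is nondecreasing: padding a list $v_1,\dots,v_n$ with the repeated vector $v_n$ leaves $S=\{\pm v_1,\dots,\pm v_n\}$, the group $G=\langle S\rangle$, and the graph $\Cay(G,S)$ unchanged, so $\chi_{\text{max}}(n)\le\chi_{\text{max}}(n+1)$. Second, every such $\Cay(G,S)$ contains the edge from $0$ to $v_1$, so $\chi\ge2$ always. It therefore suffices to prove: (i) $\chi_{\text{max}}(1)\le2$ and $\chi_{\text{max}}(2)\le2$; (ii) some three unit vectors yield a Cayley graph that is not $2$-colorable (whence $\chi_{\text{max}}(3)\ge3$, and $\chi_{\text{max}}(4)\ge3$ by monotonicity); and (iii) $\chi_{\text{max}}(4)\le3$ (whence $\chi_{\text{max}}(3)\le3$ by monotonicity).

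For (i): if the unit vectors in question are pairwise $\mathbb{R}$-linearly dependent, then being unit they agree up to sign, $G\cong\mathbb{Z}$, and $\Cay(G,S)\cong\Cay(\mathbb{Z},\{\pm1\})$ is a bi-infinite path, so $\chi=2$; this covers $n=1$ and the degenerate case of $n=2$. Otherwise, for $n=2$ the vectors $v_1,v_2$ are $\mathbb{R}$-independent, so $G\cong\mathbb{Z}^2$ and $\Cay(G,S)\cong\Cay(\mathbb{Z}^2,\{\pm e_1,\pm e_2\})$ is the integer grid, which is bipartite via the parity of $a+b$. For (ii): pick unit vectors $v_1,v_2$ at angle $120^\circ$ and set $v_3=v_1+v_2$; then $|v_3|^2=2+2\cos120^\circ=1$, so $v_3$ is a unit vector, and $0$, $v_1$, $v_1+v_2$ span a triangle in $\Cay(G,S)$ because $v_1$, $v_2$, $v_1+v_2$ all lie in $S$. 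Hence no proper $2$-coloring exists.

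For (iii), I would invoke the main theorem. Given four plane unit vectors $v_1,\dots,v_4$, the group $G=\langle v_1,\dots,v_4\rangle\le\mathbb{R}^2$ is finitely generated and torsion-free, hence free abelian of some rank $r$; from a surjection $\mathbb{Z}^4\twoheadrightarrow G$ (which splits, as $G$ is free) the group of relations among $v_1,\dots,v_4$ is free abelian of rank $4-r$. If $r\le1$ the $v_i$ are collinear, hence equal up to sign, and $\Cay(G,S)$ is again a path with $\chi\le2$. If $r\ge2$ the relation group has rank $\le2$, so the main theorem applies and computes $\chi(\Cay(G,S))$; the remaining task is to verify that this value is never $4$ when $S$ arises from unit vectors.

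I expect that last verification to be the real obstacle in deducing the corollary (the genuine depth being, of course, in the main theorem itself). The natural plan is to identify the configurations for which the general four-generator result returns chromatic number $4$ --- these should be of ``king graph'' type, with $S$ essentially of the form $\{\pm a,\pm b,\pm(a+b),\pm(a-b)\}$ --- and to rule out that such an $S$ consists of unit vectors: the parallelogram identity $|a+b|^2+|a-b|^2=2|a|^2+2|b|^2$ would force $2=4$. Once every bad case is excluded, $\chi_{\text{max}}(4)\le3$; combined with (i), (ii), and monotonicity, this yields $\chi_{\text{max}}(1)=\chi_{\text{max}}(2)=2$ and $\chi_{\text{max}}(3)=\chi_{\text{max}}(4)=3$.
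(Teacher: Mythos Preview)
Your overall architecture matches the paper's: monotonicity, direct treatment of $n\le 2$, a triangle for the lower bound at $n=3$, and an appeal to \cref{thm:main} for the upper bound at $n=4$. The genuine gap is in the final step, where you misidentify the $4$-chromatic configurations.

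\cref{thm:main} does not say the $\chi=4$ cases are of king-graph type $\{\pm a,\pm b,\pm(a+b),\pm(a-b)\}$. It says $\chi=4$ exactly when the Heuberger matrix can be brought to the form $\left(\begin{smallmatrix} 1&a\\1&b\\1&c\\0&1\end{smallmatrix}\right)$ with $3\mid a+b+c$. The first column forces (after signed reordering) $\tilde v_1+\tilde v_2+\tilde v_3=0$; the king graph is merely the single instance $(a,b,c)=(-1,1,0)$ of this infinite family, and the parallelogram identity says nothing about the rest. The paper's exclusion argument is different: three unit vectors summing to zero form an equilateral triangle, so $G$ is the triangular lattice; the second column then places $\tilde v_4$ in $G$, and the only unit vectors in that lattice are $\pm\tilde v_1,\pm\tilde v_2,\pm\tilde v_3$. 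Hence $\tilde v_4$ is redundant and the Cayley graph is the triangular lattice graph, which is $3$-colored by $\alpha\tilde v_1+\beta\tilde v_2+\gamma\tilde v_3\mapsto\alpha+\beta+\gamma\pmod 3$.

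Two smaller issues also need patching. First, when $G$ has rank $3$ or $4$ the relation group has rank $1$ or $0$, and \cref{thm:main} (a statement about $4\times 2$ matrices) does not literally apply; the paper handles these separately via the Tomato Cage Theorem and bipartiteness of the $\mathbb Z^4$-grid. Second, \cref{thm:main} assumes no zero rows; the paper disposes of a zero row by a lower-triangularization argument showing two of the unit vectors must then agree up to sign, reducing to fewer generators.
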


%Proving the first of those two statements is relatively trivial; the second will require some work.

Observe that every connected finite planar unit distance graph $X$ is isomorphic to a subgraph of one of our Cayley graphs $\text{Cay}(G,S)$.  To see that this is the case, let $e_1,\dots,e_n$ be the edges of $X$, and choose an arbitrary orientation of them.  Let $v_i$ be the terminal point minus the initial point of $e_i$.  Let $S=\{\pm v_1,\dots,\pm v_n\}$, and let $G$ be the subgroup of $\mathbb{R}^2$ generated by $S$.  Let $p_0$ be a vertex of $X$.  Then every vertex of $X$ is of the form $p_0+g$ for some $g\in G$; translating $X$ by $-p_0$ embeds $X$ in $\text{Cay}(G,S)$.  Provided one assumes the Axiom of Choice (AC), it therefore follows from the de Bruijn-Erd\H{o}s theorem \cite{de-Bruijn-Erdos} that $\chi(\mathbb{R}^2)=\chi_{\text{max}}(n)$ for sufficiently large values of $n$.  Without AC, set-theoretic issues may come into play; see \cite{Payne}.

We do not know the values of $\chi_{\text{max}}(n)$ for $n\geq 5$.  We have not yet sufficiently developed the techniques presented here for this purpose, but we hope to do so in future work.  We do get that $\chi_{\text{max}}(7)\geq 4$ by taking $v_1,v_2,v_3,v_4,v_5,v_6,v_7$ to be unit vectors defining the edges in the Mosers spindle \cite{Soifer}, a well-known $4$-chromatic planar unit distance graph.

\vspace{.1in}

The group $\mathbb{R}^2$ under addition is abelian, and so the above graphs $\text{Cay}(G,S)$ are \emph{abelian Cayley graphs}.  \cref{cor-main} will follow from our main theorem (\cref{thm:main}), a much more general result about chromatic numbers of connected abelian Cayley graphs with four generators.  We now introduce the notation needed to state that theorem.

Let $G$ be an abelian group, written additively, and let $S=\{\pm x_1,\dots,\pm x_m\}$ be a finite symmetric subset of $G$ such that $S$ generates $G$.  We say an $m$-tuple $(a_1,\dots,a_m)^t$ of integers (which we write as a column vector) is a \emph{relation} for $S$ if $a_1 x_1+\cdots a_m x_m=0$.  The set of relations for $S$ is a subgroup $H$ of $\mathbb{Z}^m$.  We say an $m\times r$ integer matrix $M$ is a \emph{Heuberger matrix} for $(G,S)$ if the columns of $M$ generate $H$.  As discussed in \cite{Cervantes_2023}, the matrix $M$ completely encodes all information about the graph $\text{Cay}(G,S)$.  (We remark that in the definition of Heuberger matrices, it would be more precise to refer to the $m$-tuple $(x_1,\dots,x_m)$ rather than the set $S$, because order matters here --- the $i$th row of $M$ corresponds to the $i$th generator $x_i$.  Technically we ought to pay no heed to the order in which elements of $S$ are listed, but for convenience we sometimes use set rather than tuple notation anyway, and we trust that this abuse of notation will not cause confusion.)

Conversely, given an $m\times r$ integer matrix $M$, we may construct an abelian group $G$ and a symmetric subset $S$ such that $M$ is a Heuberger matrix for $(G,S)$.  To wit, let $H$ be the subgroup of $\mathbb{Z}^m$ generated by the columns of $M$.  Let $G=\mathbb{Z}^m/H$, and let $S=\{H\pm e_1,\dots, H\pm e_m\}$, where $e_i$ is the column vector in $\mathbb{Z}^m$ with a $1$ in the $i$th position and $0$ elsewhere.  In this case we call $\text{Cay}(G,S)$ a \emph{standardized abelian Cayley graph} (or SACG for short), and we denote $\text{Cay}(G,S)$ by $M^{\text{SACG}}$.  If one begins with a pair $(G,S)$ and constructs $M$ as in the preceding paragraph, then indeed $\text{Cay}(G,S)$ is isomorphic to $M^{\text{SACG}}$.  Thus every connected finite-degree abelian Cayley graph is isomorphic to an SACG. 

The preprint \cite{Cervantes-small} establishes formulas for chromatic numbers of SACGs with small Heuberger matrices, namely, those of size $1\times r$, $m\times 1$, $2\times 2$, and $3\times 2$.  In the main theorem of this paper, we extend these results to determine the chromatic number of an SACG with a $4\times 2$ Heuberger matrix:

\begin{Thm}\label{thm:main}
    Let $M$ be a $4\times 2$ integer matrix, and let $X=M^\text{SACG}$. Suppose that $X$ is nonbipartite and has no loops, and that $M$ has no zero rows.  Then $\chi(X)=4$ if and only if there exists a signed permutation matrix $P$ and a unimodular matrix $U$ such that \[PMU=\begin{pmatrix}
        1 & a\\
        1 & b\\
        1 & c\\
        0 & 1 \end{pmatrix}\]for some integers $a,b,c$ with $3\,\mid\,a+b+c$, and $\chi(X)=3$ otherwise.
\end{Thm}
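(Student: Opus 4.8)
The plan rests on the standard fact (implicit in the definition of $M^{\mathrm{SACG}}$ and used throughout \cite{Cervantes_2023,Cervantes-small}) that $M^{\mathrm{SACG}}\cong(PMU)^{\mathrm{SACG}}$ for every signed permutation matrix $P$ and every unimodular matrix $U$: left multiplication by $P$ merely permutes and negates the standard generators $\pm e_i$, and negating a generator changes nothing in a symmetric generating set, while right multiplication by $U$ leaves the column lattice $H$ unchanged. So we may replace $M$ by any convenient equivalent matrix. Since $X$ is nonbipartite we have $\chi(X)\ge3$ automatically, so the whole task is to decide when $\chi(X)=3$ and, when it is not, to exhibit a proper $4$-coloring.

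First I would clear away the degenerate cases. The hypotheses force $H$ to have rank $1$ or $2$ (rank $0$ would make $M$ the zero matrix). If $H$ has rank $1$, then after column operations $M$ has a zero column and a remaining column with all entries nonzero; discarding the trivial relation realizes $X$ as an SACG with a $4\times1$ Heuberger matrix, a case resolved in \cite{Cervantes-small}, where the relevant formula gives $\chi(X)\le3$ and hence $\chi(X)=3$. Similarly, if two of the four generators of $G$ agree up to sign, then $S$ has fewer than four distinct directions and $X$ becomes an SACG whose group is three-generated with relation lattice of rank at most $1$, again covered by \cite{Cervantes-small} with $\chi(X)\le3$; such an $X$ is never the exceptional graph, because in the displayed normal form the condition $3\mid a+b+c$ is incompatible with the fourth generator coinciding up to sign with one of the first three. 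From here on I assume $H$ has rank exactly $2$ and that the four generators are pairwise distinct up to sign; in particular $X$ has no loops.

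The decisive question is whether there is a group homomorphism $\phi\colon G\to\mathbb{Z}/3$ with $\phi(\bar e_i)\ne0$ for all $i$ --- such a $\phi$ is itself a proper $3$-coloring. It is the same datum as a row vector $r\in(\mathbb{Z}/3)^4$ with $rM\equiv0\pmod3$ that is nowhere zero, i.e.\ a nowhere-zero vector in the left null space $V$ of $M$ over $\mathbb{F}_3$. If $M$ has rank at most $1$ over $\mathbb{F}_3$ then $\dim V\ge3$, and a short check shows every subspace of $\mathbb{F}_3^4$ of dimension at least $3$ contains a nowhere-zero vector; so $\chi(X)=3$, and this case is disjoint from the exceptional one, whose reduction has rank $2$ over $\mathbb{F}_3$. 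If $M$ has rank $2$ over $\mathbb{F}_3$ then $\dim V=2$, and a direct computation shows the normal form displayed in the theorem has a nowhere-zero vector in $V$ precisely when $3\nmid a+b+c$. Hence whenever $V$ contains a nowhere-zero vector we get $\chi(X)=3$ and $M$ is not equivalent to the exceptional form with $3\mid a+b+c$ --- exactly as the statement requires.

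This leaves the crux: $M$ has rank $2$ over $\mathbb{F}_3$ and the $2$-dimensional space $V\subseteq\mathbb{F}_3^4$ contains no nowhere-zero vector. I would enumerate, up to signed coordinate permutations, the finitely many such $V$, and for each lift to a normal form for the integer matrix using the remaining column freedom. For the family reducing to the normal form displayed in the theorem with $3\mid a+b+c$: eliminating $\bar e_3,\bar e_4$ and taking $f_1=\bar e_1,\ f_2=\bar e_2$ as a basis identifies $X$ with $\operatorname{Cay}\bigl(\mathbb{Z}^2,\{\pm f_1,\pm f_2,\pm(f_1{+}f_2),\pm g\}\bigr)$, where $g=\alpha f_1+\beta f_2$, $\alpha+\beta\equiv0\pmod3$, and $g\ne0$. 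The subgraph spanned by the first three generator pairs is the triangular lattice, in which every vertex together with its six neighbors spans a wheel ($6$-cycle plus hub); this forces any proper $3$-coloring to propagate rigidly, so that the proper $3$-colorings of the triangular lattice are exactly the six affine maps $v\mapsto\lambda(x+y)+\mu\pmod3$ with $\lambda\ne0$. Any $3$-coloring of $X$ restricts to such a map, which is unchanged by translation by $g$ because $\lambda(\alpha+\beta)\equiv0$, contradicting properness; hence $\chi(X)=4$. For every other normal form in this case I would instead produce an explicit proper $3$-coloring, typically pulled back from a homomorphism of $G$ onto a small group ($\mathbb{Z}/3$, $\mathbb{Z}/6$, $\mathbb{Z}/9$, or $\mathbb{Z}/3\times\mathbb{Z}/3$) or assembled piecewise along a one-dimensional direction. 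I expect this last step --- both the classification of the relevant integer matrices and the construction of these (necessarily non-homomorphic) $3$-colorings --- to be the main obstacle and the bulk of the work; the bound $\chi(X)\le4$ is handled in parallel by exhibiting proper $4$-colorings, usually a homomorphism onto $(\mathbb{Z}/2)^2$ or $\mathbb{Z}/4$ and a short ad hoc construction in the few cases where no such homomorphism exists. Finally, \Cref{cor-main} follows quickly: if four plane unit vectors realized the exceptional form, then $|f_1|=|f_2|=|f_1{+}f_2|$ would force a $120^\circ$ angle between $f_1$ and $f_2$, so that $\pm f_1,\pm f_2,\pm(f_1{+}f_2)$ are the only unit vectors in the lattice $\langle f_1,f_2\rangle$ and $g$ could not be a fourth distinct direction; thus no set of four unit vectors gives $\chi=4$, whence $\chi_{\text{max}}(4)=3$, and the values for $n=1,2,3$ are immediate.
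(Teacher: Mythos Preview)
Your plan is a genuinely different route from the paper's. The paper never works with the $\mathbb{F}_3$--left null space or with the rigidity of the triangular lattice; instead it systematically collapses pairs of rows of $M$ to obtain $3\times 2$ Heuberger matrices, invokes the complete $3\times 2$ chromatic-number formula (\cref{chi-of-mHNF}), and cross-references several such collapses to pin down the remaining subcases. Your $\mathbb{F}_3$ viewpoint is cleaner for locating homomorphic $3$-colorings, and your rigidity argument for the exceptional form is a nice, self-contained proof that $\chi=4$ there (the paper outsources this to the Tri-Triangle Lemma). What the paper's collapsing machinery buys is precisely the part you flag as ``the main obstacle'': it manufactures the needed non-homomorphic $3$-colorings as pullbacks along graph homomorphisms to $3$-chromatic $3\times 2$ SACGs, so one never has to invent colorings by hand.

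There is, however, a concrete error. Your claim that every subspace of $\mathbb{F}_3^{\,4}$ of dimension at least $3$ contains a nowhere-zero vector is false: the coordinate hyperplane $\{x:x_1=0\}$ is $3$-dimensional and contains none. This hyperplane does arise as a left null space under your hypotheses. For instance,
\[
M=\begin{pmatrix}1&0\\3&3\\3&6\\6&3\end{pmatrix}
\]
has rank $2$ over $\mathbb{Q}$, no zero rows, no loops, is nonbipartite, has pairwise distinct generators up to sign, and has column space $\operatorname{span}(e_1)$ over $\mathbb{F}_3$; so its left null space is exactly $\{x:x_1=0\}$ and there is no $\mathbb{Z}/3$-homomorphism coloring. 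The conclusion $\chi\le3$ still holds here, but not by your argument --- in the paper it falls out of the $4\times2$ three-divisible row lemma (\cref{lem:4-by-2-three-div-row-lemma}), which is itself proved by collapsing to $3\times2$. More generally, whenever some row of $M$ is divisible by $3$ your $\mathbb{F}_3$ argument may fail even in the rank $\le1$ case, and you will need a separate mechanism (the paper's lemma, or an equivalent) to supply the $3$-coloring.

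Beyond that, the rank-$2$ case with no nowhere-zero left null vector really is where almost all the work lives, and you have not carried it out. Enumerating the relevant $2$-planes in $\mathbb{F}_3^{\,4}$ up to signed permutation is quick, but lifting each to an integer normal form and then producing a proper $3$-coloring whenever the form is not equivalent to the exceptional one is a substantial case analysis --- the paper devotes the bulk of Section~\ref{sec:proof-main-theorem} (plus three theses) to exactly this, and even with the $3\times2$ formula in hand it must cross-reference two or three different row-collapses and solve several Diophantine equations. Your hoped-for target groups $\mathbb{Z}/6$, $\mathbb{Z}/9$, $\mathbb{Z}/3\times\mathbb{Z}/3$ will not always suffice (there are instances with no homomorphism to any finite group giving a proper coloring), so the ``piecewise'' constructions you allude to will be unavoidable; similarly, the $\chi\le4$ bound for the exceptional family is not always witnessed by a homomorphism to $(\mathbb{Z}/2)^2$ or $\mathbb{Z}/4$ (take $a=1,\ b=3,\ c=5$), so that too needs an argument.
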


(Some notation and terminology used in the statement of \cref{thm:main}: A \emph{proper coloring} of a graph is an assignment of a color to each vertex such that if two vertices are adjacent, then they are assigned different colors.  The \emph{chromatic number} of a graph is the smallest number of colors with which it can be properly colored.  A \emph{loop} in a graph is an edge from a vertex to itself.  A graph with a loop cannot be properly colored.  SACGs are connected as well as regular of finite degree, and they always contain at least one edge; recall that such a graph is bipartite if and only if it has chromatic number $2$.  We say that a row of a matrix is a \emph{zero row} if every entry in it is $0$.  We write $\chi(X)$ for the chromatic number of a graph $X$.  A square integer matrix is a \emph{signed permutation matrix} if each row and each column has exactly one nonzero entry, which is $\pm 1$.  A \emph{unimodular} matrix is a square integer matrix with determinant $\pm 1$.)

In the course of our research, we did not make a beeline for \cref{cor-main}.  Rather, as we investigated chromatic numbers of abelian Cayley graphs with $4\times 2$ Heuberger matrices, we observed that it is a consequence of \cref{thm:main}.  Perhaps one can prove \cref{cor-main} much more quickly and directly than we do here.

\vspace{.1in}

We have organized this paper so that readers interested in unit distance graphs and willing to stipulate the correctness of \cref{thm:main} can stop after reading the first few sections.  In Section \ref{sec:initial-prelims}, we provide the background definitions and lemmas needed to get to \cref{cor-main}.  In Section \ref{sec:proof-of-cor}, we prove that \cref{cor-main} follows from \cref{thm:main}.  In Section \ref{sec:more-prelims}, we provide the additional background definitions and lemmas needed to get to \cref{thm:main}.  In Section \ref{sec:proof-main-theorem}, we prove \cref{thm:main}.

\section{Initial preliminaries}\label{sec:initial-prelims}

The background material presented in this section comes from \cite{Cervantes_2023}, where it is discussed in more detail.

\begin{Prop}[\cite{Cervantes_2023}]\label{thm:isomorphisms}
Let $M_X^\text{SACG}$ and $M_{X'}^\text{SACG}$ be standardized abelian Cayley graphs.\begin{enumerate}

\item If $M_{X'}$ is obtained by permuting the columns of $M_X$, then $X=X'$.

\item If $M_{X'}$ is obtained by multiplying a column of $M_X$ by $-1$, then $X=X'$.

\item\label{item-add-multiple of column} Suppose $y_j$ and $y_i$ are the $j$th and $i$th columns of $M_X$, respectively, with $j\neq i$.  If $M_{X'}$ is obtained by replacing the 
$j$th column of $M_X$ with $y_j+ay_i$ for some integer $a$, then $X=X'$.

\item If $M_{X'}$ is obtained by permuting the rows of $M_X$, then $X$ is isomorphic to $X'$.

\item If $M_{X'}$ is obtained by multiplying a row of $M_X$ by $-1$, then $X$ is isomorphic to $X'$.

\end{enumerate}
\end{Prop}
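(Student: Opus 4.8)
The plan is to split the five items into the three column operations (1)--(3), which I claim leave the pair $(G,S)$ defining $M^{\mathrm{SACG}}$ literally unchanged, and the two row operations (4)--(5), which I claim replace $(G,S)$ by an isomorphic pair via an automorphism of $\mathbb{Z}^m$. The workhorse for the latter is an elementary lemma worth isolating first: if $\phi\colon G\to G'$ is a group isomorphism of abelian groups and $S\subseteq G$ is a symmetric generating set with $\phi(S)=S'$, then $\phi$ is a graph isomorphism from $\mathrm{Cay}(G,S)$ to $\mathrm{Cay}(G',S')$, since $y-x\in S$ if and only if $\phi(y)-\phi(x)=\phi(y-x)\in\phi(S)=S'$, so $\phi$ both preserves and reflects adjacency; when moreover $G=G'$ and $S=S'$, the two Cayley graphs are literally the \emph{same} graph.

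For (1)--(3), recall that $M^{\mathrm{SACG}}$ is built from the subgroup $H\le\mathbb{Z}^m$ generated by the columns of $M$ by setting $G=\mathbb{Z}^m/H$ and $S=\{H\pm e_1,\dots,H\pm e_m\}$. Each of the three column moves --- swapping two columns, negating a column, or replacing column $y_j$ by $y_j+ay_i$ with $j\neq i$ --- leaves $H$ unchanged; for the last move this is because $y_j+ay_i\in H$ while conversely $y_j=(y_j+ay_i)-ay_i$ lies in the span of the new column set, since $y_i$ is still present among the columns. The number of rows is also unchanged, so $G=\mathbb{Z}^m/H$ and $S$ are literally identical for $M_X$ and $M_{X'}$, whence $X=X'$ on the nose.

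For (4)--(5), a row swap is realized by left-multiplying $M_X$ by the corresponding $m\times m$ permutation matrix $Q$, and a row negation by left-multiplying by the diagonal matrix $Q$ having a single $-1$ on the diagonal and $1$'s elsewhere; in both cases $Q$ is a signed permutation matrix, hence lies in $GL_m(\mathbb{Z})$ and permutes the set $\{\pm e_1,\dots,\pm e_m\}$. From $M_{X'}=QM_X$ we get $H_{X'}=Q(H_X)$, so $Q$ descends to a well-defined group isomorphism $\bar Q\colon\mathbb{Z}^m/H_X\to\mathbb{Z}^m/H_{X'}$, i.e.\ $G_X\to G_{X'}$; and since $Q$ permutes $\{\pm e_i\}$, it carries $S_X=\{H_X\pm e_i\}$ bijectively onto $S_{X'}=\{H_{X'}\pm e_i\}$. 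By the lemma, $\bar Q$ is a graph isomorphism $X\to X'$.

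I do not expect any real obstacle here; the only points needing a moment's care are that $\bar Q$ is well defined and bijective --- immediate, as $Q$ is a bijection of $\mathbb{Z}^m$ carrying $H_X$ onto $H_{X'}$ --- and that a single row swap or row negation really is effected by a signed permutation matrix of the appropriate size. One could instead package (1)--(3) as ``right multiplication of $M$ by a unimodular matrix leaves $M^{\mathrm{SACG}}$ unchanged'' and (4)--(5) as ``left multiplication by a signed permutation matrix changes it only up to isomorphism,'' but for the proof it is cleanest to verify the five generating moves one at a time as above.
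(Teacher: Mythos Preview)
Your proof is correct and follows essentially the same approach as the paper: the paper (citing \cite{Cervantes_2023}) merely remarks that the column operations in (1)--(3) do not affect $H$, so the same generating set works for both, while for (4) the isomorphism comes from permuting the generating set $\{H+e_1,\dots,H+e_m\}$ and for (5) it is induced by $e_i\mapsto -e_i$. Your version spells out more detail (the explicit lemma on Cayley graph isomorphisms and the packaging via signed permutation matrices $Q$), but the underlying argument is the same.
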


For (4), the isomorphism is obtained by permuting the generating set $\{H+e_1,\dots,H+e_m\}$.  The isomorphism from (5) is induced by $e_i\mapsto -e_i$.  The column operations in parts (1), (2), and (3) do not affect $H$, so we may use the same generating set $\{H+e_1,\dots,H+e_m\}$ for both, with elements in that order.

Using elementary linear algebra, one can show that $M_{X'}$ is obtained from $M_X$ by a finite sequence of column operations as in parts (1), (2), and (3) of \cref{thm:isomorphisms} if and only if there exists a unimodular matrix $U$ such that $M_{X'}=M_XU$.  Moreover, $M_{X'}$ is obtained from $M_X$ by a finite sequence of row operations as in parts (4) and (5) of \cref{thm:isomorphisms} if and only if there exists a signed permutation matrix $P$ such that $M_{X'}=PM_X$.

\vspace{.1in}

For any $m\times r$ integer matrix $M$ with $m\geq r$, there exists a unimodular matrix $U$ such that the top $r\times r$ minor of $MU$ is lower-triangular.  The procedure for finding $U$ is much like that of putting an integer matrix into Hermite normal form.

\begin{Prop}[\cite{Cervantes_2023}]\label{thm:delete-zero-columns}Let $M_X^\text{SACG}$ be a standardized abelian Cayley graph, where $M_X$ has a zero column.  Then $M_X^\text{SACG}=M_{X'}^\text{SACG}$, where $M_{X'}$ is obtained from $M_X$ by deleting a zero column.
\end{Prop}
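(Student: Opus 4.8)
The plan is to unwind the definition of a standardized abelian Cayley graph and observe that a zero column is invisible to every ingredient of that construction, so that the two graphs are not merely isomorphic but literally equal. Write $M_X$ as an $m\times r$ matrix and suppose its $k$th column is the zero column; let $M_{X'}$ be the $m\times(r-1)$ matrix obtained by deleting that column. Note first that this operation is \emph{not} one of the column operations in parts (1)--(3) of \cref{thm:isomorphisms} (it changes the number of columns, so it cannot come from multiplication by a unimodular matrix), which is why the statement warrants its own short argument rather than following from \cref{thm:isomorphisms}.

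The key step is the trivial observation that removing the zero vector from a generating set of a subgroup of $\Z^m$ does not change the subgroup: if $H$ denotes the subgroup of $\Z^m$ generated by the columns of $M_X$ and $H'$ the subgroup generated by the columns of $M_{X'}$, then $H'=H$, since the only generator we discarded was $0$. I would then trace this equality through the definition of an SACG. The crucial bookkeeping point is that deleting a column leaves the number of rows unchanged, so both $M_X$ and $M_{X'}$ have $m$ rows; hence the ambient group $\Z^m$ and the standard basis vectors $e_1,\dots,e_m$ are literally the same in both constructions. We therefore obtain the same quotient group $G=\Z^m/H=\Z^m/H'$ and the same symmetric generating set $S=\{H\pm e_1,\dots,H\pm e_m\}$, whence $M_X^{\text{SACG}}=\text{Cay}(G,S)=M_{X'}^{\text{SACG}}$ as graphs.

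There is no genuine obstacle here; the only point requiring a moment's attention is exactly that preservation of the row count $m$, which is what forces the two vertex sets and adjacency relations to coincide on the nose rather than up to isomorphism. If one wishes to be scrupulous, one might also address the degenerate case $r=1$, where deleting the lone (zero) column produces an $m\times 0$ matrix; under the conventions of \cref{sec:initial-prelims} this still makes sense, with $H'=\{0\}$, $G=\Z^m$, and $S=\{\pm e_1,\dots,\pm e_m\}$, and the argument goes through verbatim, though in our intended applications $r\geq 2$ and this case does not arise.
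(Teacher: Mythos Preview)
Your argument is correct: deleting a zero column leaves the subgroup $H\subseteq\Z^m$ unchanged, the row count $m$ is preserved, and hence both the quotient group $G=\Z^m/H$ and the generating set $S=\{H\pm e_1,\dots,H\pm e_m\}$ are literally the same, giving equality of the two SACGs. Note, however, that the paper does not supply its own proof of this proposition; it is quoted as a background result from \cite{Cervantes_2023}, so there is no in-paper argument to compare against---your short verification is exactly the sort of routine check that justifies the citation.
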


Because of \cref{thm:delete-zero-columns}, we may always assume that the matrix $M_X$ is of full rank over $\mathbb{Q}$.  In particular, we may assume that the number of columns does not exceed the number of rows.

\vspace{.1in}

\begin{Prop}[\cite{Cervantes_2023}]\label{thm:delete-zero-rows}Let $M_X^\text{SACG}$ be a standardized abelian Cayley graph, where $M_X$ has a zero row.  Then $\chi(M_X^\text{SACG})=\chi(M_{X'}^\text{SACG})$, where $M_{X'}$ is obtained from $M_X$ by deleting a zero row.
\end{Prop}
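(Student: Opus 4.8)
The plan is to recognize $X = M_X^{\text{SACG}}$ as a Cartesian product $X' \squareop P$, where $X' = M_{X'}^{\text{SACG}}$ and $P$ is a two-way-infinite path, and then invoke the classical fact that $\chi(A \squareop B) = \max(\chi(A),\chi(B))$ (specialized to the case where one factor is $P$, with $\chi(P) = 2$).

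First I would fix notation. By \cref{thm:isomorphisms} a row permutation changes neither the isomorphism type nor the chromatic number, so assume the zero row of $M_X$ is the bottom one; let $M_{X'}$ be obtained by deleting it, and assume $m \geq 2$ (if $m = 1$ then $M_X$ has no nonzero row, a degenerate situation treated at the end). Let $H \subseteq \mathbb{Z}^m$ and $H' \subseteq \mathbb{Z}^{m-1}$ be the subgroups generated by the columns of $M_X$ and $M_{X'}$. Because the last row of $M_X$ vanishes, each column of $M_X$ is a column of $M_{X'}$ with a $0$ appended, so $H = H' \times \{0\}$ inside $\mathbb{Z}^m = \mathbb{Z}^{m-1} \times \mathbb{Z}$. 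Hence the quotient splits as $G := \mathbb{Z}^m/H \cong G' \times \mathbb{Z}$ with $G' := \mathbb{Z}^{m-1}/H'$, and under this isomorphism $H + e_i \mapsto (H' + e_i,\,0)$ for $i < m$ while $H + e_m \mapsto (0,1)$. Therefore the connection set $S = \{H \pm e_1, \dots, H \pm e_m\}$ of $X$ corresponds to $(S' \times \{0\}) \cup (\{0\} \times \{\pm 1\})$, where $S'$ is the connection set of $X'$; this is exactly the connection set for the Cartesian product $\Cay(G',S') \squareop \Cay(\mathbb{Z},\{\pm 1\})$, so $X \cong X' \squareop P$ with $P = \Cay(\mathbb{Z},\{\pm 1\})$.

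Second, I would give the short, self-contained proof of $\chi(X) = \max(\chi(X'),2)$ in this setting. Observe first that $X$ has a loop if and only if $X'$ does --- a loop $H + e_i = H$ with $i < m$ is equivalent to $H' + e_i = H'$, and $H + e_m$ is never the identity --- so if either graph has a loop, both chromatic numbers equal $\infty$ and there is nothing to prove; assume henceforth that both are loopless. Then fixing the $\mathbb{Z}$-coordinate exhibits a subgraph of $X$ isomorphic to $X'$, and fixing the $G'$-coordinate exhibits a subgraph isomorphic to $P$, so $\chi(X) \geq \max(\chi(X'),2)$. Conversely, since $X'$ is loopless and $m \geq 2$, it contains the edge from $H'$ to $H' + e_1$, whence $\chi(X') \geq 2$ and $n := \max(\chi(X'),2) = \chi(X')$. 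Choosing a proper coloring $c' \colon G' \to \mathbb{Z}/n\mathbb{Z}$ of $X'$, the assignment sending $(g,i)$ to $c'(g) + i$ reduced modulo $n$ is a proper $n$-coloring of $X$: along an edge inherited from $X'$ the two colors differ because $c'$ is proper, and along an edge inherited from $P$ they differ by $\pm 1 \not\equiv 0 \pmod n$ since $n \geq 2$. Hence $\chi(X) = \chi(X')$.

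I do not anticipate a genuine obstacle. The two points needing care are the bookkeeping in the splitting $G \cong G' \times \mathbb{Z}$ (confirming where the generators go) and the degenerate case $m = 1$: there $M_{X'}$ is empty and, after \cref{thm:delete-zero-columns}, $M_X$ is a $1 \times 0$ matrix with $X = \Cay(\mathbb{Z},\{\pm 1\})$, so deleting a zero row is not something one would do; accordingly I would simply assume throughout that $M_X$ has a nonzero row, which is the only case in which the reduction is needed.
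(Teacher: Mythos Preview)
Your argument is correct. Note, however, that the paper does not supply its own proof of this proposition: it is quoted as a background result from the reference cited in the proposition's header, so there is no in-paper proof to compare against. Your Cartesian-product decomposition $X \cong X' \squareop P$ together with the explicit coloring $(g,i)\mapsto c'(g)+i \pmod{n}$ is the natural and standard route; the paper's only comment on the matter---that the two graphs are not isomorphic, merely equichromatic---is consistent with exactly this picture. Your observation about the degenerate case $m=1$ is apt: the statement as written fails there (one gets $\chi(X)=2$ but $\chi(X')=1$), so an implicit hypothesis that $M_X$ has at least one nonzero row is needed; this is harmless for every application in the paper.
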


Observe that \cref{thm:delete-zero-rows} does \emph{not} assert that the graphs in question are isomorphic, only that their chromatic numbers are equal.

\vspace{.1in}

For matrices consisting of a single column, the so-called Tomato Cage Theorem furnishes a formula for the chromatic number of the corresponding SACG:

\begin{Prop}[Tomato Cage Theorem, \cite{Cervantes_2023}]\label{thm:tomato-cage}Let $M^\text{SACG}$ be a standardized abelian Cayley graph, where $M$ consists of a single column.  If $M=\pm e_i$ for some $i$, then $M^\text{SACG}$ has loops and therefore cannot be properly colored.  Otherwise,\[\chi(M^\text{SACG})=\begin{cases}
    2 & \text{ if }s\text{ is even, and}\\
    3 & \text { if }s\text{ is odd}
\end{cases}\] where $s$ is the sum of the entries in $M$.\end{Prop}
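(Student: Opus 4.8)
The plan is to handle the loop case separately and then split on the parity of the entry-sum $s$. If $M=\pm e_i$ for some $i$, then $e_i$ lies in the subgroup $H=\Z M$ of $\Z^m$, so $H+e_i=H$; thus the identity vertex $H$ is adjacent to itself, $M^{\mathrm{SACG}}$ has a loop, and cannot be properly colored. Assume henceforth that $M\neq\pm e_i$ for every $i$, and write $X=M^{\mathrm{SACG}}$. By part (5) of \cref{thm:isomorphisms}, negating a row of $M$ produces a matrix whose SACG is isomorphic to $X$, hence of the same chromatic number, and changes the entry-sum by an even amount; applying such negations I would first arrange that every entry $a_i$ of $M$ is nonnegative, which alters neither $\chi(X)$ nor the parity of $s$. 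Then $s\neq1$, because an entry-sum of $1$ with all entries $\geq0$ would force $M=e_j$; so $s=0$ or $s\geq2$, and in particular the odd values of $s$ are exactly those with $s\geq3$. Finally, $X$ is loopless and contains an edge (from $H$ to $H+e_1$, which are distinct since $e_1\notin H$), so $\chi(X)\geq2$.

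For the upper bounds the workhorse is the coordinate-sum homomorphism $\phi\colon\Z^m\to\Z$, $\phi(x_1,\dots,x_m)=x_1+\cdots+x_m$, for which $\phi(M)=s$ and $\phi(e_i)=1$ for all $i$. If $s$ is even then $\phi(H)\subseteq2\Z$, so $c(H+x):=\phi(x)\bmod2$ is a well-defined $\{0,1\}$-coloring of $X$; any two adjacent vertices differ by some $\pm e_i$, and $\phi(\pm e_i)\equiv1\pmod2$, so $c$ is proper and $\chi(X)=2$. If $s$ is odd then $s\geq3$ and $\phi(H)\subseteq s\Z$, so $\phi$ induces a graph homomorphism from $X$ onto $\Cay(\Z/s\Z,\{\pm1\})$, which is the $s$-cycle $C_s$; hence $\chi(X)\leq\chi(C_s)=3$.

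It remains to show that $X$ is nonbipartite when $s$ is odd, for which I would produce an odd closed walk. Since $a_1e_1+\cdots+a_me_m=M\in H$, the cosets $H+0$ and $H+(a_1e_1+\cdots+a_me_m)$ are equal, so the sequence of partial sums
\[
0,\;e_1,\;2e_1,\;\dots,\;a_1e_1,\;a_1e_1+e_2,\;\dots,\;a_1e_1+\cdots+a_me_m,
\]
read modulo $H$, traces a closed walk in $X$: each step adds a generator $\pm e_i$ and joins distinct vertices because $e_i\notin H$, so it is a genuine walk, and its length is $s$, which is odd. A graph containing a closed walk of odd length contains an odd cycle, so $X$ is nonbipartite and $\chi(X)\geq3$; together with the previous paragraph, $\chi(X)=3$.

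The step that needs care is the normalization making every entry nonnegative, since this is precisely what excludes $s=\pm1$ in the odd case: without it the would-be target $C_1$ is a looped single vertex and the homomorphism argument for the upper bound collapses, forcing a separate treatment of primitive columns of entry-sum $\pm1$. The remaining points — well-definedness of the induced maps on $X$, and the standard fact that an odd closed walk contains an odd cycle — are routine.
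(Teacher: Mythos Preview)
Your argument is correct. The paper itself does not prove \cref{thm:tomato-cage}; it is quoted as a result from \cite{Cervantes_2023}, so there is no in-paper proof to compare against. That said, your proof is entirely in the spirit of the paper's toolkit: the coordinate-sum map $\phi$ is precisely what one obtains by iterating the row-collapse homomorphism of \cref{homomorphism-theorem} until only one row remains, landing in $(s)^{\mathrm{SACG}}\cong C_{|s|}$ for $|s|\geq 3$; your even case is the single-column instance of \cref{prop:bipartite}; and your loop analysis is the single-column instance of \cref{prop:loops}. The one genuinely extra ingredient you supply is the normalization to nonnegative entries via \cref{thm:isomorphisms}(5), which is exactly the right way to rule out $|s|=1$ in the odd branch so that the target cycle $C_s$ is loopless and $3$-chromatic. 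The closed-walk construction from partial sums is a clean, direct way to exhibit nonbipartiteness and avoids appealing to \cref{prop:bipartite} in that direction.
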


\vspace{.1in}

Suppose $M$ is an $m\times r$ integer matrix of full rank over $\mathbb{Q}$.  Then $M^{\text{SACG}}$ has finitely many vertices if and only if $m=r$.

\section{Proof that \cref{thm:main} implies \cref{cor-main}}\label{sec:proof-of-cor}

In this section we prove \cref{cor-main}, under the assumption that \cref{thm:main} has already been proven.

\begin{proof}
    If $v$ is a unit vector in $\mathbb{R}^2$, and $S=\{\pm v\}$, and $G$ is the subgroup of $\mathbb{R}^2$ generated by $S$, then $\text{Cay}(G,S)$ is a $2$-regular tree, a.k.a. a doubly infinite path graph.  Hence $\chi_\text{max}(1)=2$.

    Now suppose that $v_1$ and $v_2$ are unit vectors in $\mathbb{R}^2$. Let $S=\{\pm v_1, v_2\}$, and let $G$ be the subgroup of $\mathbb{R}^2$ generated by $S$.  Then $\text{Cay}(G,S)$ is either an infinite grid graph or else a doubly infinite path graph, depending on whether $v_1=\pm v_2$.  Hence $\chi_\text{max}(2)=2$.

    For $n=3$ and $n=4$, we first establish a lower bound.  Take $v_1=(1,0), v_2=(-1/2,\sqrt{3}/2), v_3=(-1/2,-\sqrt{3}/2)$.  Let $S=\{\pm v_1, v_2, v_3\}$, and let $G$ (as always) be the subgroup of $\mathbb{R}^2$ generated by $S$.  The group $G$ can be visualized as a triangular lattice.  The graph $\text{Cay}(G,S)$ contains a $3$-cycle with vertices $(0,0)$, $(1,0)$, $(1/2,\sqrt{3}/2)$.  Hence $\chi_\text{max}(3)\geq 3$.  Taking $v_4=v_3$ and playing the same game all over again, we see that $\chi_\text{max}(4)\geq 3$.  In general, because vectors may be repeated, $\chi_\text{max}$ is a nondecreasing function.

    We now turn to the heart of this proof, in which we show that $\chi_\text{max}(4)\leq 3$.  To prove this, it will suffice to show that $\chi(\text{Cay}(G,S))\leq 3$ whenever $S=\{\pm v_1, \pm v_2, \pm v_3, \pm v_4\}$ for planar unit vectors $v_1, v_2, v_3. v_4$.  Once we show this, it will also prove that $\chi_\text{max}(3)\leq 3$ (because $\chi_\text{max}$ is nondecreasing), and so our work here will be done.

\vspace{.1in}

Let $M$ be a Heuberger matrix for $G$.  Note that $M$ has $4$ rows, and suppose that $M$ has $r$ columns.  As discussed in Section \ref{sec:initial-prelims}, we may assume that $M$ has full rank over $\mathbb{Q}$ (so that $r\leq 4$) and that the top $r\times r$ minor of $M$ is lower-triangular.  We divide into cases according to the value of $r$.  Let $X=\text{Cay}(G,S)$.

\guillemetright\, Suppose $r=1$.  Observe that $X$ does not have loops, because $v_i\neq 0$ for all $i$.  Therefore $\chi(X)\leq 3$ by \cref{thm:tomato-cage}.

\guillemetright\, Suppose $r=2$.  We seek to apply \cref{thm:main}.  As in the previous case, $X$ does not have loops.  If $X$ is bipartite, then $\chi(X)=2$, in which case we're done.  Now suppose that $M$ has a zero row, and let $M'$ be the matrix obtained by deleting that row.  Without loss of generality, suppose it is the fourth row of $M$ that was deleted.  Then $M'$ is a Heuberger matrix for $(G', S')$, where $S'=\{\pm v_1,\pm v_2, \pm v_3 \}$ and $G'$ is the subgroup of $\mathbb{R}^2$ generated by $S'$.  By \cref{thm:delete-zero-rows}, we have that $\chi(X)=\chi(X')$, where $X'=\text{Cay}(G',S')$.  Because $M$ has full rank and the top $2\times 2$ minor of $M$ is lower-triangular, it follows that the second column of $M'$ is of the form $(0,a,b)^t$ for some integers $a,b$, not both zero.  Hence $av_2+bv_3=0$.  But then $v_2$ and $v_3$ are linearly dependent unit vectors in $\mathbb{R}^2$, which occurs if and only if $v_2=\pm v_3$.  Thus $S'=\{\pm v_1,\pm v_2\}$, so $\chi(X')\leq 2$, because $\chi_\text{max}(2)=2$.

We now assume that $X$ is nonbipartite and has no loops, and that $M$ has no zero rows.  By \cref{thm:main}, we have that $\chi(X)=3$ unless there exists a signed permutation matrix $P$ and a unimodular matrix $U$ such that \[PMU=\begin{pmatrix}
        1 & a\\
        1 & b\\
        1 & c\\
        0 & 1 \end{pmatrix}.\]Suppose such $P$ and $U$ exist.  By \cref{thm:isomorphisms} and the remarks following it, $PMU$ is a Heuberger matrix for a graph $\tilde{X}$ isomorphic to $X$.  Moreover, $\tilde{X}=\text{Cay}(G,\tilde{S})$, where $\tilde{S}$ is obtained by permuting the elements of $S$ and negating some subset of them, in accordance with the action of $P$ on the rows of $M$.  (As we noted in the introduction, $\tilde{S}$ and $S$ should be regarded more precisely as $m$-tuples rather than as sets.)  Let $\tilde{S}=\{\pm\tilde{v_1},\pm\tilde{v_2},\pm\tilde{v_3},\pm\tilde{v_4}\}$.  For example, if \[P=\begin{pmatrix}
            0 & 1 & 0 & 0\\
            -1 & 0 & 0 & 0\\
            0 & 0 & 0 & 1\\
            0 & 0 & 1 & 0  \end{pmatrix},\]then $\tilde{v_1}=-v_2$ and $\tilde{v_2}=v_1$ and $\tilde{v_3}=v_4$ and $\tilde{v_4}=v_3$.

From the first column of $PMU$, then, we find that $\tilde{v_1}+\tilde{v_2}+\tilde{v_3}=0$.  It is then straightforward to show that these three unit vectors must define the sides of an equilateral triangle of side length $1$.  In other words, we must have that $\tilde{v_1}=R_\theta(1,0)$ and $\tilde{v_2}=R_\theta(-1/2,\sqrt{3}/2)$ and $\tilde{v_3}=R_\theta(-1/2,-\sqrt{3}/2)$, where $R_\theta$ is rotation by some fixed angle $\theta$.

From the second column of $PMU$, we find that $\tilde{v_4}=-a\tilde{v_1}-b\tilde{v_2}-c\tilde{v_3}$.  Thus $G$ is generated by $\tilde{v_1}, \tilde{v_2},$ and $\tilde{v_3}$.  So $G$ equals a triangular lattice in the plane, and $\tilde{v_4}$ is an element in this lattice.  There are precisely six unit vectors in this lattice, namely $\pm\tilde{v_1}, \pm\tilde{v_2},$ and $\pm\tilde{v_3}$.  Because $\tilde{v_4}$ is a unit vector, therefore $\tilde{v_4}=\pm\tilde{v_i}$ for some $i\in\{1,2,3\}$.  Thus $\tilde{X}=\text{Cay}(G,\{\pm\tilde{v_1}, \pm\tilde{v_2},\pm\tilde{v_3}\})$.  A proper coloring of $\tilde{X}$ is given by \[\alpha\tilde{v_1}+\beta\tilde{v_2}+\gamma\tilde{v_3}\mapsto \alpha+\beta+\gamma\;(\text{mod }3),\]so $\chi(\tilde{X})\leq 3$.  Because $X$ is isomorphic to $\tilde{X}$, this shows that $\chi(X)\leq 3$.

\guillemetright\,Suppose $r=3$.  Because $M$ has full rank and the top $3\times 3$ minor of $M$ is lower-triangular, it follows that the third column of $M$ is of the form $(0,0,a,b)^t$ for some integers $a,b$, not both zero.  Hence $av_3+bv_4=0$.  But then $v_3$ and $v_4$ are linearly dependent unit vectors in $\mathbb{R}^2$, which occurs if and only if $v_3=\pm v_4$.  Thus $X=\text{Cay}(G,S')$, where $S'=\{\pm v_1,\pm v_2,\pm v_3\}$.  Let $M'$ be a $3\times r'$ Heuberger matrix for $G,S'$.  Assume $M'$ has full rank (so $r'\leq 3$) and that the top $r'\times r'$ minor of $M'$ is lower-triangular.  Again we split into cases according to the value of $r'$.  If $r'=1$, then we have that $\chi(X)\leq 3$ by the Tomato Cage Theorem.  If $r'=2$, then use the ``lower-triangular argument'' once again to show that $v_2=\pm v_3$, which allows us to eliminate $v_3$.  Then $\chi(X)\leq\chi_\text{max}(2)=2$.  We cannot have $r'=3$, for then $X$ would be a finite graph.

\guillemetright\,Finally, we cannot have $r=4$, for then $X$ would contain only finitely many vertices.  But even a single unit vector generates an infinite subgroup of $\mathbb{R}^2$, so $X$ is an infinite graph\end{proof}

\section{More preliminaries}\label{sec:more-prelims}

In this section we provide more background, in particular the additional definitions and theorems needed for our proof of \cref{thm:main}.

Given two vertices $v$ and $w$ in a graph, we write $v\sim w$ to indicate that $v$ and $w$ are adjacent.  Recall that a \emph{graph homomorphism} is a function $f$ from the vertex set of one graph to the vertex set of another graph such that 
$f(v)\sim f(w)$ whenever $v\sim w$.  We write $X\xrightarrow{\ocirc}X'$ to indicate that there is a graph homomorphism from $X$ to $X'$.  Given two integer matrices $M_X$ and $M_{X'}$, we write $M_X \xrightarrow{\ocirc} M_{X'}$ to indicate that there is a graph homomorphism from $(M_X)^\text{SACG}$ to $(M_{X'})^\text{SACG}$.  If there is a graph homomorphism from $(M_X)^\text{SACG}$ to $(M_{X'})^\text{SACG}$, then $\chi((M_X)^\text{SACG})\leq\chi((M_X')^\text{SACG})$.

\begin{Prop}[\cite{Cervantes_2023}]\label{homomorphism-theorem}
Let $X$ and $X'$ be standardized abelian Cayley graphs with Heuberger matrices $M_X$ and $M_{X'}$, respectively. If $M_{X'}$ is obtained by ``collapsing'' any two rows of $M_X$ by adding them, then $M_X \xrightarrow{\ocirc} M_{X'}$.
\end{Prop}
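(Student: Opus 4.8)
The plan is to realize the claimed homomorphism as the map on SACGs induced by a coordinate-collapsing quotient of the ambient lattices. Write $M_X$ as an $m\times r$ matrix, and suppose $M_{X'}$ is obtained from it by collapsing rows $i$ and $j$ (say $i<j$), so that $M_{X'}$ is $(m-1)\times r$. Let $H\subseteq\mathbb{Z}^m$ and $H'\subseteq\mathbb{Z}^{m-1}$ be the subgroups generated by the columns of $M_X$ and $M_{X'}$ respectively, so that $(M_X)^{\text{SACG}}=\text{Cay}(\mathbb{Z}^m/H,\,S)$ with $S=\{H\pm e_1,\dots,H\pm e_m\}$ and likewise $(M_{X'})^{\text{SACG}}=\text{Cay}(\mathbb{Z}^{m-1}/H',\,S')$ with $S'=\{H'\pm f_1,\dots,H'\pm f_{m-1}\}$, where $e_1,\dots,e_m$ and $f_1,\dots,f_{m-1}$ are the respective standard bases.

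First I would introduce the linear map $\pi\colon\mathbb{Z}^m\to\mathbb{Z}^{m-1}$ that collapses the $i$th and $j$th coordinates: $\pi$ sends a column vector $c=(c_1,\dots,c_m)$ to the vector obtained by replacing $c_i$ with $c_i+c_j$ and then deleting $c_j$; on basis vectors, $\pi(e_i)=\pi(e_j)=f_i$, while $\pi(e_k)$ equals $f_k$ for $k<j$, $k\neq i$, and equals $f_{k-1}$ for $k>j$. Two properties are immediate and carry the whole argument: (a) $\pi$ sends the $k$th column of $M_X$ to the $k$th column of $M_{X'}$ for every $k$ — this is precisely the effect of collapsing rows $i$ and $j$ — so $\pi(H)\subseteq H'$ and hence $\pi$ descends to a group homomorphism $\bar\pi\colon\mathbb{Z}^m/H\to\mathbb{Z}^{m-1}/H'$; and (b) $\pi(e_k)$ is, for every $k$, one of the standard basis vectors $f_\ell$.

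Next I would check that $\bar\pi$ is a graph homomorphism $(M_X)^{\text{SACG}}\to(M_{X'})^{\text{SACG}}$. If $x\sim y$ in $\text{Cay}(\mathbb{Z}^m/H,S)$, then $x-y=H\pm e_k$ for some $k$, so, since $\bar\pi$ is a group homomorphism, $\bar\pi(x)-\bar\pi(y)=\bar\pi(H\pm e_k)=H'\pm\pi(e_k)$; by (b) this is $H'\pm f_\ell$ for some $\ell$, which lies in $S'$, so $\bar\pi(x)\sim\bar\pi(y)$. Hence $M_X\xrightarrow{\ocirc}M_{X'}$, and in particular $\chi((M_X)^{\text{SACG}})\leq\chi((M_{X'})^{\text{SACG}})$.

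There is no real obstacle here: the content is entirely the choice of $\pi$ together with the elementary observation (b) that a coordinate-collapsing map carries each standard generator to a standard generator, so that the quotient respects the prescribed generating sets. The two minor points deserving care are the index bookkeeping when $i$ and $j$ are not the last two rows — handled by the explicit formula for $\pi$ above, or avoidable by first applying part (4) of \cref{thm:isomorphisms} to move rows $i,j$ into the last two positions — and, since the proposition assumes nothing about loops, the remark that the verification above never used $x\neq y$: if $0\in S$, say $e_k\in H$, then $f_\ell=\pi(e_k)\in\pi(H)\subseteq H'$, so $0\in S'$ and a loop is sent to a loop.
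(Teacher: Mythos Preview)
The paper does not supply its own proof of this proposition; it is imported from \cite{Cervantes_2023} as background. Your argument is correct and is exactly the natural one: the linear ``row-collapsing'' map $\pi\colon\mathbb{Z}^m\to\mathbb{Z}^{m-1}$ carries the column lattice $H$ into $H'$ and sends each standard generator to a standard generator, so it descends to a Cayley-graph homomorphism. There is nothing to correct, and nothing meaningfully different to compare against.
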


The following two propositions offer direct methods for determining whether an SACG with a given Heuberger matrix contains loops or has chromatic number $2$, respectively.

\begin{Prop}[\cite{Cervantes_2023}]\label{prop:loops}
Let $X$ be an SACG with Heuberger matrix $M_X$. Then $X$ has loops if and only if a standard basis vector $e_j$ is in the $\Z$-span of the columns of $M_X$. 
\end{Prop}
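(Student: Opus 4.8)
The plan is to unwind the definition of the standardized abelian Cayley graph $X = M_X^{\text{SACG}} = \text{Cay}(G,S)$ and trace precisely what it means for a loop to occur. Recall that $G = \mathbb{Z}^m/H$, where $H$ is the subgroup of $\mathbb{Z}^m$ generated by the columns of $M_X$, and that $S = \{H \pm e_1, \dots, H \pm e_m\}$. First I would observe that in any Cayley graph $\text{Cay}(G,S)$ of an additively written group $G$, a vertex $x$ is adjacent to itself exactly when $x - x = 0$ lies in $S$; since this condition does not depend on $x$, the graph $X$ has a loop (at some vertex, equivalently at every vertex) if and only if the identity element $0_G$ belongs to $S$.

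Next I would translate the condition ``$0_G \in S$'' back into a statement about $\mathbb{Z}^m$ and $H$. The identity of $G = \mathbb{Z}^m/H$ is the coset $H$ itself, so $0_G \in S$ means $H = H + e_j$ or $H = H - e_j$ for some $j \in \{1, \dots, m\}$, i.e.\ $e_j \in H$ or $-e_j \in H$. Because $H$ is a subgroup of $\mathbb{Z}^m$, it is closed under negation, so $e_j \in H$ if and only if $-e_j \in H$; hence $0_G \in S$ if and only if $e_j \in H$ for some $j$. Finally, by definition $H$ is exactly the $\mathbb{Z}$-span of the columns of $M_X$, which yields the claimed equivalence: $X$ has loops if and only if some standard basis vector $e_j$ lies in the $\mathbb{Z}$-span of the columns of $M_X$.

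The argument is essentially a short chain of definitional equivalences, so I do not expect a genuine obstacle here; the only points demanding a little care are applying the adjacency convention correctly (so that a loop at $x$ corresponds to $0 \in S$ rather than to some condition involving $x$) and keeping straight that membership of the coset $H \pm e_j$ in $S$ is the same as membership of $\pm e_j$ in the subgroup $H$, together with the use of closure of $H$ under negation to drop the sign.
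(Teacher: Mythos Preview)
Your argument is correct: it is precisely the natural chain of definitional equivalences, and every step is justified. The paper does not supply its own proof of this proposition but simply imports it from \cite{Cervantes_2023}; your write-up is exactly the standard argument one would expect to find there.
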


\begin{Prop}[\cite{Cervantes_2023}]\label{prop:bipartite}
Let $X$ be an SACG with Heuberger matrix $M_X$. Then $X$ is bipartite if and only if the column sums of $M_X$ are all even.
\end{Prop}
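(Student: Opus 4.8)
The plan is to recast bipartiteness of $X$ as a divisibility condition on the defining lattice $H$ by means of the homomorphism $\sigma\colon\mathbb{Z}^m\to\mathbb{Z}$ sending a vector to the sum of its coordinates. Recall that $G=\mathbb{Z}^m/H$, where $H$ is the subgroup of $\mathbb{Z}^m$ generated by the columns of $M_X$, and that $S=\{H\pm e_1,\dots,H\pm e_m\}$. Since $\sigma(e_j)=1$ for each $j$, the $j$th column sum of $M_X$ is exactly $\sigma$ of the $j$th column; and because $\sigma$ is a homomorphism while $H$ is generated by those columns, ``every column sum of $M_X$ is even'' is equivalent to ``$\sigma(H)\subseteq 2\mathbb{Z}$.'' So it suffices to prove that $X$ is bipartite if and only if $\sigma(H)\subseteq 2\mathbb{Z}$.

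For the ``if'' direction I would exhibit a proper $2$-coloring directly: define $c\colon G\to\mathbb{Z}/2$ by $c(v+H)=\sigma(v)\bmod 2$, which is well defined precisely because $\sigma(H)\subseteq 2\mathbb{Z}$. If $x\sim y$ in $X$, then $x-y=H\pm e_j$ for some $j$, so $c(x)-c(y)=\pm 1\neq 0$ in $\mathbb{Z}/2$; hence $c$ is proper. Since an SACG always has at least one edge, a proper $2$-coloring forces $\chi(X)=2$, i.e.\ $X$ is bipartite.

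For the ``only if'' direction I would prove the contrapositive. Suppose some column $w=\sum_i w_i e_i$ of $M_X$ has odd coordinate sum $\sigma(w)$. Then $0=w+H=\sum_i w_i(e_i+H)$ in $G$, and expanding each summand $w_i(e_i+H)$ into $|w_i|$ copies of $\pm(e_i+H)\in S$ writes the identity of $G$ as a sum of $N:=\sum_i |w_i|$ elements of $S$; the partial sums of this list form a closed walk of length $N$ based at the identity of $X$. Since $|w_i|\equiv w_i\pmod 2$, we get $N\equiv\sigma(w)\equiv 1\pmod 2$, so $X$ has an odd closed walk, and a connected graph with an odd closed walk is not bipartite.

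The argument is short, and I anticipate no real obstacle; the care needed is entirely in the bookkeeping. The hypothesis enters in exactly one place, the well-definedness of $c$. In the converse the key observation is that the length of a closed walk based at the identity has the same parity as $\sigma$ of the corresponding element of $H$, so a column of odd sum genuinely yields an \emph{odd} closed walk; one also uses that $X$ is connected (so that an odd closed walk, not merely a simple odd cycle, already rules out bipartiteness) and that $X$ has an edge (so ``bipartite'' means $\chi(X)=2$ rather than $\chi(X)=1$). Equivalently, one could phrase the whole thing as: $X$ is bipartite if and only if there is a graph homomorphism from $M_X$ to the $1\times 1$ matrix $(2)$, whose SACG is $K_2$ — but that reformulation requires the same computation, so the direct coloring/walk argument above seems cleanest.
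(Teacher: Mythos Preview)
Your argument is correct. The paper does not supply its own proof of this proposition; it is quoted from \cite{Cervantes_2023} as background. Your approach via the coordinate-sum homomorphism $\sigma$, producing a well-defined map $G\to\mathbb{Z}/2$ in one direction and an odd closed walk in the other, is exactly the natural proof and is presumably what the cited reference does as well. One minor remark: in the converse you do not actually need connectedness, since any graph containing an odd closed walk already contains an odd cycle and hence fails to be bipartite; but invoking it does no harm.
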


We write $X\cong Y$ to indicate that a graph $X$ is isomorphic to a graph $Y$.

\begin{Ex}
    Let $X=M^{\text{SACG}}$, where \[M=\begin{pmatrix}
        9 & 21\\
        1 & 4
    \end{pmatrix}.\]Collapsing the two rows of $M$ by adding them, we find that $M\xrightarrow{\ocirc}(10\;\;\;25)$.  Applying \cref{thm:isomorphisms} to the latter matrix repeatedly, we find that \[(10\;\;\;25)^{\text{SACG}}\cong(10\;\;\;5)^{\text{SACG}}\cong(0\;\;\;5)^{\text{SACG}}\cong(5)^{\text{SACG}}.\]We have that $(5)^{\text{SACG}}$ is a $5$-cycle, so $\chi((10\;\;\;25)^{\text{SACG}})\leq 3$.  It follows that $\chi(X)\leq 3$.  By \cref{prop:bipartite}, we have that $X$ is not bipartite (because the sum of the second column of $M$ is $25$, which is odd), so $\chi(X)\geq 3$.  Therefore $\chi(X)=3$.
\end{Ex}

The next definition provides a standard form for a $3 \times 2$ Heuberger matrix. We then include a formula for determining the chromatic number of an arbitrary standardized abelian Cayley graph with a Heuberger matrix in this form. \cref{homomorphism-theorem} assures us that ``collapsing'' rows of a $4 \times 2$ matrix down to a $3 \times 2$ counterpart produces a graph homomorphism and therefore an upper bound on the chromatic number of the graph associated with the $4 \times 2$ matrix.  In this way we can take advantage of the formula for chromatic numbers of SACGs with $3\times 2$ Heuberger matrices.  This is our main tool for proving \cref{thm:main}.

\begin{Def}[modified Hermite Normal Form]\label{mHNF}
\normalfont Let $M_X$ be a $3 \times 2$ integer matrix with no zero rows or zero columns: 

$$
M_X = \begin{pmatrix}
    y_{11} & y_{12} \\
    y_{21} & y_{22} \\
    y_{31} & y_{32}
\end{pmatrix}_X
$$
\noindent We say $M_X$ is in \textit{modified Hermite Normal Form} if it meets the following conditions:

\begin{enumerate}
    \item $y_{11} > 0$, and
    \item $y_{12} = 0$, and
    \item $y_{11} \equiv 0 \textrm{ (mod 3)}$ or $y_{22} \equiv y_{32} \textrm{ (mod 3)}$, and
    \item $y_{22} \leq y_{32}$, and
    \item $|y_{22}| \leq |y_{32}|$, and
    \item If $y_{22} \neq 0$, then $-\frac{1}{2}|y_{22}| \leq y_{21} \leq 0$. Otherwise, $-\frac{1}{2}|y_{32}| \leq y_{31} \leq 0$.
    
\end{enumerate}
\end{Def}

\begin{Thm}[\cite{Cervantes-small}]\label{chi-of-mHNF}
Let $X$ be a standardized abelian Cayley graph with Heuberger matrix $M_X$. Suppose $M_X$ is in modified Hermite Normal Form:

$$
M_X = \begin{pmatrix}
    y_{11} & 0 \\
    y_{21} & y_{22} \\
    y_{31} & y_{32}
\end{pmatrix}_X
$$

\noindent Then we have the following:
\begin{enumerate}
    \item If the first column of $M_X$ is $e_1$ or the second column of $M_X$ is $e_3$, then $X$ has loops and therefore no proper coloring.
    \item If both of the column sums of $M_X$ are even, then $\chi(X) = 2$
    \item If $M_X$ belongs to one of the following six exceptional cases for some $k \in \N$ and $a \in \Z$ such that $3 \nmid a$, then $\chi(X)=4$.
    \begin{align}\notag
      (i) &  
      \begin{pmatrix}
        1 & 0 \\
        0 & 1 \\
        \pm 3k & 1 + 3k
    \end{pmatrix} &
      (ii) & 
      \begin{pmatrix}
        1 & 0 \\
        0 & -1 \\
        \pm 3k & -1 + 3k
    \end{pmatrix} & 
      (iii) &
      \begin{pmatrix}
        1 & 0 \\
        -1 & 2 \\
        -1 \pm 3k & 2 + 3k
    \end{pmatrix} 
    \end{align}
    \begin{align}\notag
    (iv) &
    \begin{pmatrix}
        1 & 0 \\
        -1 & -2 \\
        -1 \pm 3k & -2 + 3k
    \end{pmatrix} &
    (v) & 
    \begin{pmatrix}
        1 & 0 \\
        0 & -1 \\
        \pm3k & 2
    \end{pmatrix} &
    (vi) & 
    \begin{pmatrix}
        1 & 0 \\
        -1 & a \\
        -1 & a + 3(k-1)
    \end{pmatrix}
    \end{align}
    
    \item Otherwise, $\chi(X) = 3$
\end{enumerate}
\end{Thm}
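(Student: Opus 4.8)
The plan is to dispatch parts (1) and (2) immediately and concentrate all the work on separating $\chi = 3$ from $\chi = 4$. Part (1) is a direct application of \cref{prop:loops}: if the first column equals $e_1$ or the second equals $e_3$, then a standard basis vector lies in the $\mathbb{Z}$-span of the columns of $M_X$, so $X$ has loops. Part (2) is immediate from \cref{prop:bipartite}, since both column sums even is equivalent to $X$ bipartite, which (as $X$ has an edge) is equivalent to $\chi(X) = 2$. For everything else I will use the reformulation that a proper $3$-coloring is exactly a graph homomorphism $X \to C_3$, where $C_3 = \mathrm{Cay}(\mathbb{Z}/3,\{\pm 1\})$, and more generally that a homomorphism $X \to C_d$ into any odd cycle $C_d$ with $d = 2k+1 \ge 3$ certifies $\chi(X) \le 3$.

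My main upper-bound engine is linear colorings. Writing $g = \alpha e_1 + \beta e_2 + \gamma e_3$, a map $\phi(g) = c_1\alpha + c_2\beta + c_3\gamma \bmod d$ descends to $G = \mathbb{Z}^3/H$ iff $M_X^{t}c \equiv 0 \pmod d$, and it is a homomorphism to $C_d$ iff each $c_i \equiv \pm 1 \pmod d$. Hence $\chi(X) \le 3$ whenever the kernel of $M_X^{t} \bmod 3$ contains a vector with all coordinates nonzero (the case $d=3$), and also whenever for some odd $d$ there are signs $s \in \{\pm 1\}^3$ with $d$ dividing the $\gcd$ of the two entries of $M_X^{t}s$. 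I will first establish the universal bound $\chi(X) \le 4$ in all loop-free cases by exhibiting a homomorphism into $K_4 = \mathrm{Cay}\big((\mathbb{Z}/2)^2, (\mathbb{Z}/2)^2 \setminus \{0\}\big)$, i.e.\ a linear coloring modulo $2$ assigning each $e_i$ a nonzero vector of $(\mathbb{Z}/2)^2$ killing both relations, with the handful of residual cases dispatched by Brooks' theorem applied to a finite collapse of $X$. Then, for the non-bipartite non-exceptional cases I will produce a homomorphism to $C_3$ (a mod-$3$ linear coloring where available, otherwise a homomorphism into a larger odd cycle, or a collapse via \cref{homomorphism-theorem} down to a $2\times 2$ matrix and thence to an odd cycle or to a single-column graph handled by \cref{thm:tomato-cage}); since non-bipartiteness already forces $\chi(X) \ge 3$, this yields $\chi(X) = 3$.

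The heart of the theorem, and the step I expect to be the main obstacle, is the lower bound $\chi(X) \ge 4$ for the six exceptional families. Here I will use a winding-number obstruction. Given a hypothetical homomorphism $f : X \to C_3$, I lift the color increment across each edge to $\pm 1 \in \mathbb{Z}$; around any closed walk $w$ the net increment equals $3\,\wind(w)$ for an integer $\wind(w)$, and the commuting-square relations $e_i + e_j = e_j + e_i$ force these increments to assemble into a closed $\mathbb{Z}/3$-cochain. Evaluating along the two relation cycles given by the columns of $M_X$ produces congruences mod $3$ that the windings must satisfy simultaneously. The plan is to show that the mHNF conditions guarantee, uniformly in the parameters $k$ and $a$ with $3 \nmid a$, that in each of the six families these congruences are contradictory, so no homomorphism $X \to C_3$ exists and $\chi(X) \ge 4$; combined with the universal upper bound this gives $\chi(X) = 4$. (For example, in family $(vi)$ one checks directly that the second relation forces $c_2 + c_3 \equiv 0$ and then the first forces $c_1 \equiv 0 \pmod 3$, ruling out any mod-$3$ linear coloring; the winding argument is what upgrades this to the nonexistence of \emph{all} $3$-colorings.)

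Finally, the role of the modified Hermite Normal Form is organizational: conditions (3)--(6) pin down the residues mod $3$ of the entries and column sums and bound the off-triangle entries, reducing the verification of both the upper-bound constructions and the winding congruences to a finite, explicitly enumerable list. The crux is therefore to prove completeness of the classification in both directions, namely that every non-bipartite, loop-free matrix in mHNF outside the six families admits one of the linear-or-collapse $3$-colorings, while every matrix inside them is obstructed by the winding argument.
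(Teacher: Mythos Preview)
This theorem is not proved in the present paper at all; it is quoted verbatim from \cite{Cervantes-small} and used as a black box throughout Sections~\ref{sec:more-prelims}--\ref{sec:proof-main-theorem}. There is therefore no in-paper argument to compare your proposal against.

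On the proposal itself, parts (1) and (2) and the upper-bound machinery (linear colorings, collapses via \cref{homomorphism-theorem}, \cref{thm:tomato-cage}) are fine in outline. The genuine gap is your lower-bound mechanism for the six exceptional families. You propose to read off, from a putative $3$-coloring $f$, the edge increments $\sigma_i(v)=f(v+e_i)-f(v)\in\{\pm1\}\bmod 3$, and then ``evaluate along the two relation cycles given by the columns of $M_X$'' to obtain contradictory congruences. But the increment $\sigma_i(v)$ depends on $v$, not just on $i$; the commuting-square identity you invoke says only that $\sigma_i(v)+\sigma_j(v+e_i)=\sigma_j(v)+\sigma_i(v+e_j)$, which is automatic for any coboundary and does not force $\sigma_i$ to be constant. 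Evaluating your two relation cycles at the basepoint yields two equations in many unknowns $\sigma_i(v)$, and in general these are satisfiable---so no contradiction emerges from the two windings alone. What is actually needed is a \emph{rigidity} or propagation step: for example, in family (v) one has $G\cong\Z$ with connection set $\{\pm1,\pm2,\pm3k\}$, and the pair $\{\pm1,\pm2\}$ forces every $3$-coloring of $\Z$ to be the periodic pattern $\dots,0,1,2,0,1,2,\dots$ up to reflection and shift, which then conflicts with the $\pm3k$ edges. Analogous, family-specific propagation arguments (not just two winding numbers) are required for (i)--(iv) and (vi); your sketch does not supply them, and the sentence ``the winding argument is what upgrades this to the nonexistence of \emph{all} $3$-colorings'' is precisely the unproved step.
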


Sometimes we are not easily able to get a $3 \times 2$ matrix into modified Hermite Normal Form. However, if we can get the matrix into a certain form, we can still get valuable information about the chromatic number.  The next two lemmas are variations on that theme.

\begin{Lem}[\cite{Tim}]\label{almost-mHNF-lemma}
Suppose the $3 \times 2$ matrix $M_X$ below has no rows or columns of all zeros, and satisfies $y_{22} \equiv y_{32} \textrm{ (mod 3)}$ with $y_{22} \neq y_{32}$. Suppose also that $\chi(X) = 4$, where $X=M_X^{\text{SACG}}$. Then we have that $|y_{22}| \in \{1,2 \}$, $|y_{32}| \in \{1,2 \}$, or $|y_{21}| = 1$.

$$
M_X = \begin{pmatrix}
    1 & 0 \\
    y_{21} & y_{22} \\
    y_{21} & y_{32}
\end{pmatrix}
$$

\end{Lem}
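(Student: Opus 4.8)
The strategy is to reduce to \cref{chi-of-mHNF} by bringing $M_X$ toward modified Hermite Normal Form while exploiting — and carefully tracking — the distinctive shape of $M_X$: its first column $(1,y_{21},y_{21})^t$ has equal second and third entries, and its second column $(0,y_{22},y_{32})^t$ begins with $0$. First note that $\chi(X)=4$ forces, via \cref{prop:loops} and \cref{prop:bipartite}, that $X$ has no loops and is nonbipartite; in particular $y_{21}\neq 0$, since otherwise the first column would be $e_1$ and $X$ would have a loop. The operations that preserve the shape of $M_X$ are: transposing rows $2$ and $3$ (which swaps $y_{22}\leftrightarrow y_{32}$ but fixes the first column, whose last two entries agree), negating the second column, and negating rows $2$ and $3$ simultaneously (which replaces $y_{21}$ by $-y_{21}$ and negates the second column). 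Using these I would normalize so that $y_{22}\le y_{32}$ and $|y_{22}|\le|y_{32}|$; this leaves $|y_{21}|$, the multiset $\{|y_{22}|,|y_{32}|\}$, and the hypotheses $y_{22}\equiv y_{32}\pmod 3$ and $y_{22}\neq y_{32}$ untouched. After this, conditions (1), (2), (4), (5) of \cref{mHNF} hold, and condition (3) holds because $y_{22}\equiv y_{32}\pmod 3$.

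The only condition of \cref{mHNF} that might still fail is (6). When $y_{22}\neq 0$ it demands that the $(2,1)$-entry lie in $[-\tfrac12|y_{22}|,0]$, and the only column operation left that keeps $y_{12}=0$ is adding a multiple $t$ of the second column to the first; this moves the $(2,1)$-entry through $y_{21}+y_{22}\mathbb{Z}$, but at the same time moves the $(3,1)$-entry through $y_{21}+y_{32}\mathbb{Z}$, so it destroys the equality of those two entries unless $t=0$. I would split into two cases according to whether condition (6) can be achieved. If condition (6) can be achieved, then after the normalizations and this one further column operation $M_X$ is genuinely in modified Hermite Normal Form, so by \cref{chi-of-mHNF} and $\chi(X)=4$ it equals one of the matrices (i)--(vi). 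Its second column is still $(0,y_{22},y_{32})^t$ (that operation does not touch the second column), so: in cases (i), (ii), (v) the $(2,2)$-entry is $\pm 1$, and in (iii), (iv) it is $\pm 2$, whence $\min(|y_{22}|,|y_{32}|)\in\{1,2\}$; in case (vi) the first column is $(1,-1,-1)^t$, so its $(2,1)$- and $(3,1)$-entries agree, and since $y_{22}\neq y_{32}$ this forces the added multiple $t$ to be $0$, hence $y_{21}=-1$ and $|y_{21}|=1$. Either way the conclusion holds.

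The remaining case — condition (6) unattainable — is the main obstacle, since it lies outside the scope of \cref{chi-of-mHNF}. A short computation shows this forces $|y_{22}|\ge 3$ (when $|y_{22}|\le 2$, or when $y_{22}=0$ and one checks the "otherwise" branch of condition (6) against the six exceptional matrices, either condition (6) can be met after all or $\chi(X)$ cannot equal $4$), hence also $|y_{32}|\ge 3$; so here the conclusion of the lemma amounts to proving $|y_{21}|=1$, equivalently to proving that $|y_{21}|\ge 2$ (recall $y_{21}\neq 0$) implies $\chi(X)\le 3$, contradicting the hypothesis. The plan is to descend to a smaller matrix. If $y_{22}+y_{32}=0$ this is immediate: by \cref{homomorphism-theorem} together with \cref{thm:delete-zero-columns} and \cref{thm:tomato-cage}, $X$ admits a homomorphism to a single-column SACG with odd column sum, so $\chi(X)\le 3$. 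Otherwise, by \cref{homomorphism-theorem} collapsing rows $2$ and $3$ gives a graph homomorphism from $X$ to $(M')^{\text{SACG}}$, where $M'=\begin{pmatrix}1 & 0\\ 2y_{21} & y_{22}+y_{32}\end{pmatrix}$, i.e., to the circulant $\text{Cay}\bigl(\mathbb{Z}/|y_{22}+y_{32}|,\{\pm 1,\pm 2y_{21}\}\bigr)$, while collapsing rows $1$ and $2$ or rows $1$ and $3$ gives two further targets; since each such homomorphism yields $\chi(X)\le\chi(\text{target})$, it would suffice to show that for $|y_{21}|\ge 2$ one of these small circulants is $3$-colorable. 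Alternatively, using the identification $X\cong\text{Cay}\bigl(\mathbb{Z}^2/\langle(y_{22},y_{32})\rangle,\{\pm(1,0),\pm(0,1),\pm(y_{21},y_{21})\}\bigr)$, one can look for an explicit periodic $3$-coloring, guided by the residue condition on $-y_{21}$ modulo $|y_{22}|$ that characterizes this case.

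I expect the bookkeeping in this last case to be the most delicate part: one must rule out, for every admissible choice of parameters with $|y_{22}|,|y_{32}|\ge 3$ and $|y_{21}|\ge 2$, the possibility that $X$ is $4$-chromatic — either by controlling the chromatic numbers of the circulants produced by collapsing rows, or by constructing a coloring directly. The first two cases, by contrast, are essentially mechanical once the shape-preserving normalization in the first paragraph is in place.
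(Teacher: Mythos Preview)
Your overall strategy — normalize toward modified Hermite Normal Form and then read off the conclusion from the six exceptional cases of \cref{chi-of-mHNF} — is the right one, and your handling of the case where condition~(6) is achieved is correct. In particular, your observation that in exceptional case~(vi) the equality of the $(2,1)$ and $(3,1)$ entries forces $t=0$ (since $y_{22}\neq y_{32}$) and hence $|y_{21}|=1$ is exactly the point.

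The gap is that your ``remaining case'' — where condition~(6) is supposedly unattainable — is empty. Condition~(6) can \emph{always} be achieved: after the shape-preserving normalizations for (1)--(5), you may add an integer multiple of the second column to the first (placing the $(2,1)$ entry into an interval of length $|y_{22}|$ centered at $0$), and then, if that entry is positive, negate the first column and the first row. None of these operations disturbs the second column, so (4) and (5) remain in force, and the resulting matrix is genuinely in modified Hermite Normal Form. The paper says exactly this in Section~\ref{sec:proof-main-theorem} when putting $M_U$ into mHNF: ``To ensure the sixth condition is met, we can subtract an integer multiple of the second column from the first and multiply the first row and column by $\pm 1$.'' Since $\chi(X)=4$ rules out loops and bipartiteness, \cref{chi-of-mHNF} then forces the mHNF matrix to be one of (i)--(vi), and your case analysis finishes the proof. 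Everything you wrote about collapsing rows, circulants, and explicit periodic colorings in the ``remaining case'' is therefore unnecessary — you were trying to circumvent an obstacle that isn't there. The lemma is cited from \cite{Tim} rather than proved in the paper, but the approach you take (once the empty case is discarded) is precisely the standard one the paper itself employs throughout Section~6.
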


\begin{Lem}[Three-Divisible Row Lemma, \cite{Tim}]\label{tim-3k-lemma}
For $k_1,k_2 \in \Z$, let $X=M_X^{\text{SACG}}$, where$$
M_X = \begin{pmatrix}
    3k_1 & 3k_2 \\
    y_{21} & y_{22} \\
    y_{31} & y_{32}
\end{pmatrix}.
$$

\noindent Suppose that $M_X$ has no zero rows and $X$ has no loops. Then $\chi(X) \leq 3$. 
\end{Lem}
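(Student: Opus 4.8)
The plan is to bring $M_X$ into a $3\times 2$ modified Hermite Normal Form and then read off the chromatic number from \cref{chi-of-mHNF}. The crucial point is that the hypothesis ``some row of $M_X$ has all entries divisible by $3$'' is preserved by every row and column operation available to us, and that none of the six exceptional $\chi=4$ matrices listed in \cref{chi-of-mHNF}(3) has such a row; so the exceptional case is automatically ruled out, and the loop case is ruled out by assumption.

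First I would dispose of the degenerate situation in which the two columns of $M_X$ are linearly dependent over $\mathbb{Q}$ (in particular if one column is zero): after column operations we may take the second column to be zero, and deleting it via \cref{thm:delete-zero-columns} leaves a single column $w$. Since $w$ is a $\mathbb{Z}$-combination of the original columns, its first entry is still divisible by $3$; as $X$ has no loops we have $w\neq\pm e_j$, so the Tomato Cage Theorem gives $\chi(X)\in\{2,3\}$. From now on assume the columns of $M_X$ are $\mathbb{Q}$-independent. Then $M_X$ has rank $2$, so no column operation ever produces a zero column; and since $M_X$ has no zero rows, no row or column operation ever produces a zero row, because such operations act on each row by a fixed unimodular matrix on the right. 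Hence the normal-form procedure of \cite{Cervantes-small} yields $\tilde M=PM_XU$ in modified Hermite Normal Form, with $P$ a signed permutation matrix and $U$ unimodular. By \cref{thm:isomorphisms}, $\tilde M^{\text{SACG}}\cong X$; in particular $\tilde M^{\text{SACG}}$ has no loops and $\chi(\tilde M^{\text{SACG}})=\chi(X)$.

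Now every entry of each row of $\tilde M$ is a $\mathbb{Z}$-linear combination of the entries of the corresponding row of $M_X$ (column operations act on rows on the right, row operations only permute and negate rows), so $\tilde M$ again has a row all of whose entries are divisible by $3$, i.e.\ a row that reduces to $(0,0)$ modulo $3$. Apply \cref{chi-of-mHNF} to $\tilde M$. Case (1) is excluded since there are no loops. For Case (3) I would reduce each of the six matrices (i)--(vi) modulo $3$ and observe that none of them has a row $\equiv(0,0)$: for instance the bottom row of (i) is $(\pm3k,\,1+3k)\equiv(0,1)$, that of (v) is $(\pm3k,\,2)\equiv(0,2)$, and in (vi) the second and third rows reduce to $(-1,a)\equiv(2,\bar a)$ with $2\neq 0$; the cases (ii),(iii),(iv) are checked the same way. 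Since $\tilde M$ \emph{does} have a row $\equiv(0,0)\pmod 3$, it is not one of these matrices, so Case (3) does not apply. Therefore $\tilde M$ falls under Case (2) or Case (4), giving $\chi(X)=\chi(\tilde M^{\text{SACG}})\leq 3$.

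The only real obstacle is the bookkeeping in the middle step: checking that the passage to modified Hermite Normal Form simultaneously preserves ``no zero rows,'' ``no loops,'' and ``some row divisible by $3$,'' and invoking from \cite{Cervantes-small} that such a normal form exists once the columns are $\mathbb{Q}$-independent. The comparison against the exceptional list (i)--(vi) is purely mechanical once one works modulo $3$.
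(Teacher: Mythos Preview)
The paper does not prove this lemma itself; it is quoted as a known result from \cite{Tim}, so there is no in-paper argument to compare against.

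Your proposal is correct and is almost certainly the intended proof. The two observations that drive it --- that signed row permutations and unimodular column operations preserve the property ``some row is divisible by $3$,'' and that a direct mod-$3$ inspection shows none of the six exceptional matrices in \cref{chi-of-mHNF}(3) has such a row --- are exactly the natural way to deduce the lemma from \cref{chi-of-mHNF}. Your handling of the degenerate rank-one case via \cref{thm:delete-zero-columns} and the Tomato Cage Theorem is fine (the remark about the first entry of $w$ being divisible by $3$ is correct but not actually needed there; the no-loops hypothesis already gives $w\neq\pm e_j$). The bookkeeping that ``no zero rows,'' ``no zero columns,'' and ``no loops'' survive passage to $\tilde M=PM_XU$ is justified exactly as you say, and the existence of modified Hermite Normal Form for a full-rank $3\times2$ matrix with no zero rows is indeed supplied by \cite{Cervantes-small}.
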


The next lemma shows that an SACG with a $4 \times 2$ Heuberger matrix of a particular form has a chromatic number of four.

\begin{Lem}[Tri-Triangle Lemma, \cite{Tim}]\label{tri-triangle}
Let $X=M_X^{\text{SACG}}$, where
$$
M_X = \begin{pmatrix}
    1 & y_{12} \\
    1 & y_{22} \\
    1 & y_{32} \\
    0 & y_{42} \\
\end{pmatrix}.
$$

\noindent Suppose that $X$ does not have loops. Then: 

$$
\chi(X) = \begin{cases}
  4 & \textit{if } \ |y_{42}| = 1 \textrm{ \textit{and} } 3 \ | \ y_{12}+y_{22}+y_{32}\\
  3 & otherwise.
  \end{cases}
$$
\end{Lem}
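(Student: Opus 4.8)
The plan is to first pin down the easy lower bound and a normalization, then split on $|y_{42}|$ (we take $M_X$ to have no zero rows, i.e.\ $y_{42}\ne 0$ — this is implicit in the ``tri-triangle'' shape and is the situation the lemma gets applied in). For the lower bound, the first column of $M_X$ has the odd sum $3$, so $X$ is nonbipartite by \cref{prop:bipartite}, and it has no loops by hypothesis, hence $\chi(X)\ge 3$. For the normalization, I would use part (3) of \cref{thm:isomorphisms} to replace the second column of $M_X$ by (second column)$-y_{12}\cdot$(first column); this leaves $X$ unchanged and brings $M_X$ to the form
\[
\begin{pmatrix} 1 & 0\\ 1 & b\\ 1 & c\\ 0 & d\end{pmatrix}, \qquad b = y_{22}-y_{12},\quad c = y_{32}-y_{12},\quad d = y_{42},
\]
with $3\mid y_{12}+y_{22}+y_{32}\iff 3\mid b+c$; negating the last row (part (5) of \cref{thm:isomorphisms}) I may assume $d>0$. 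The first column now encodes $x_1+x_2+x_3=0$ and the second encodes $bx_2+cx_3+dx_4=0$, so $x_3=-x_1-x_2$ and $dx_4=v$ with $v:=cx_1+(c-b)x_2$; note $v_1+v_2=2c-b\equiv-(b+c)\pmod 3$.

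In the case $d=1$ the $\{1,4\}$-rows of $M_X$ form a unimodular $2\times2$ block, so $G$ is free of rank $2$ on $x_1,x_2$ and $X=\Cay(\Z^2,\{\pm x_1,\pm x_2,\pm x_3,\pm x_4\})$ with $x_3=-x_1-x_2$ and $x_4=v$; in particular $X$ contains the triangular lattice $\Cay(\Z^2,\{\pm x_1,\pm x_2,\pm x_3\})$ as a spanning subgraph. If $3\nmid b+c$, then $m x_1+n x_2\mapsto m+n\pmod 3$ is a proper $3$-colouring of $X$ ($\pm x_1,\pm x_2,\pm x_3$ shift the colour by $\pm1$, and $\pm x_4$ by $\pm(v_1+v_2)\not\equiv 0$), so $\chi(X)=3$. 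If $3\mid b+c$, then $v_1+v_2\equiv0$, so that colouring is constant along the edge $\{0,x_4\}$; since the triangular lattice is uniquely $3$-colourable up to permuting the colours, $X$ admits no proper $3$-colouring and $\chi(X)\ge 4$. For the reverse inequality, loop-freeness forces $(b,c)\ne(0,0)$ (else $x_4=0$), and I would verify by a short case check that then at least one of ``collapse rows $\{2,4\}$'' and ``collapse rows $\{3,4\}$'' produces a loopless $3\times 2$ matrix; its SACG has chromatic number $\le 4$ by \cref{chi-of-mHNF}, so $\chi(X)\le 4$ by \cref{homomorphism-theorem}, giving $\chi(X)=4$.

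In the case $d\ge 2$ I claim $\chi(X)=3$, so only an upper bound is needed, and I split on $3\mid b+c$. Every element of $G$ has a unique normal form $(w,t)$ with $w\in\Z^2$ (its $\langle x_1,x_2\rangle$-part) and $t\in\{0,\dots,d-1\}$ (the $x_4$-coordinate reduced via $dx_4=v$), where $x_4=((0,0),1)$ and ``carrying'' past $t=d-1$ subtracts $d$ from $t$ and adds $v$ to $w$. If $3\mid b+c$: colour $(w,t)\mapsto w_1+w_2+f(t)\pmod 3$, where $f$ is a proper $3$-colouring of the cycle $C_d$ (two distinct colours when $d=2$); the $\pm x_1,\pm x_2,\pm x_3$-edges change only $w$ and hence the colour, while an $\pm x_4$-edge either moves $t$ by $\pm1$ without a carry (colour changes by $\pm(f(t\pm1)-f(t))\ne0$) or wraps around (colour changes by $\pm((v_1+v_2)+f(0)-f(d-1))=\pm(f(0)-f(d-1))\ne0$, using $v_1+v_2\equiv0$). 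If $3\nmid b+c$ and $3\nmid d$: the assignment $x_1,x_2,x_3\mapsto 1$, $x_4\mapsto d^{-1}(-(b+c))\not\equiv0\pmod 3$ is a well-defined homomorphism $G\to\Z/3$ carrying each $\pm x_i$ into $\{\pm1\}$, i.e.\ a graph homomorphism $X\to\Cay(\Z/3,\{\pm1\})$, so $\chi(X)\le 3$. If $3\nmid b+c$ and $3\mid d$: collapsing rows $\{2,3\}$ gives $\bigl(\begin{smallmatrix}1&0\\2&b+c\\0&d\end{smallmatrix}\bigr)$, which has a row divisible by $3$, no zero rows, and no loops (as $d\ne\pm1$), so $\chi\le 3$ by \cref{tim-3k-lemma} and hence $\chi(X)\le 3$ by \cref{homomorphism-theorem}. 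In every sub-case $\chi(X)=3$.

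The step I expect to fight hardest is the lower bound $\chi(X)\ge 4$ when $d=1$ and $3\mid b+c$, which rests on the classical fact that the triangular lattice has, up to permuting the three colours, exactly one proper $3$-colouring: colour one triangle with the three colours and observe that the colour of every further vertex is then forced — the two common neighbours of any already-coloured edge must both receive the remaining colour — and check that the resulting colouring is exactly $m x_1+n x_2\mapsto m+n\pmod3$. This argument must be written out with care. The rest is bookkeeping: justifying the normal-form description of $G$ and the wrap-around computation in the $d\ge2$, $3\mid b+c$ case, and the finite loop-check among the two candidate collapses in the $d=1$ upper bound.
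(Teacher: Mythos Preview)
The paper does not prove this lemma; it is quoted from the thesis~\cite{Tim}, so there is no in-paper argument to compare against. What I can do is assess your proposal on its own terms.

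Your argument is correct in all essentials. The normalization to $\bigl(\begin{smallmatrix}1&0\\1&b\\1&c\\0&d\end{smallmatrix}\bigr)$ is exactly right, and the identification $G\cong\Z^3/\Z(-c,b-c,d)$ (giving the normal form $(w,t)$ when $d\ge 2$) is clean. In the $d=1$, $3\mid b+c$ case, the appeal to unique $3$-colourability of the triangular lattice is the right idea, and your propagation sketch is the standard proof. For the companion upper bound $\chi(X)\le 4$, your ``one of the two collapses is loopless'' claim does hold, but only because you are already in the sub-case $3\mid b+c$: the two bad sets intersect precisely at $(b,c)=(-1,-1)$, which has $b+c\not\equiv 0\pmod 3$; you should say this explicitly rather than leaving it as ``a short case check for all $(b,c)\ne(0,0)$''. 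In the $d\ge 2$ case your three sub-cases are handled correctly; the wrap-around computation uses exactly $2c-b\equiv -(b+c)\pmod 3$, and the collapse to $\bigl(\begin{smallmatrix}1&0\\2&b+c\\0&d\end{smallmatrix}\bigr)$ in the $3\nmid b+c$, $3\mid d$ branch is loopless because $d\ge 2$ rules out $e_3$ and the other $e_i$ are immediate.

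Two small caveats. First, you explicitly exclude $y_{42}=0$; the lemma as stated does not, and this case does arise (it lands in ``otherwise'', and one must still verify $\chi=3$). It reduces via \cref{thm:delete-zero-rows} to the $3\times 2$ matrix $\bigl(\begin{smallmatrix}1&0\\1&b\\1&c\end{smallmatrix}\bigr)$, which is handled by \cref{chi-of-mHNF} after a routine reduction; you should add a sentence dispatching it. Second, when you invoke \cref{chi-of-mHNF} on a collapsed $3\times 2$ matrix, that theorem requires modified Hermite normal form; strictly speaking you must first bring the collapse into mHNF via \cref{thm:isomorphisms}, which is always possible here since (under $3\mid b+c$) the collapse has no zero rows or columns.
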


In \cite{Tim}, a system is established for categorizing most $4 \times 2$ Heuberger matrices into one of three ``buckets.''

\begin{Def}[pre-modified Hermite Normal Form]\label{def:pMHNF}
\normalfont Let $M$ be a $4 \times 2$ integer matrix.  Also let $A_{1,2}, \ A_{2,3}$, and $A_{3,4}$ be defined as follows:
\begin{align}\notag
A_{1,2} &= 
\begin{pmatrix}
1 & 1 & 0 & 0\\
0 & 0 & 1 & 0\\
0 & 0 & 0 & 1
\end{pmatrix} 
&
A_{2,3} &= \begin{pmatrix}
1 & 0 & 0 & 0\\
0 & 1 & 1 & 0\\
0 & 0 & 0 & 1
\end{pmatrix}
&
A_{3,4} &= \begin{pmatrix}
1 & 0 & 0 & 0\\
0 & 0 & 1 & 0\\
0 & 0 & 1 & 1
\end{pmatrix}
\end{align}

We say that $M$ is in \textit{pre-modified Hermite Normal Form} if 

\begin{enumerate}
    \item There exists an $i \in \{1,2,3\}$ such that $A_{i,i+1}M_X$ either has a zero row or is in modified Hermite Normal Form, and
    \item If $i=1$, then every entry of the top row of $A_{1,2}M_X$ is divisible by $3$.
\end{enumerate}

\end{Def}

Observe that $A_{i,i+1}M$ is the matrix obtained by collapsing rows $i$ and $i+1$ of $M$ by adding them.

\begin{Prop}\cite{Tim}
Let $M_X$ be a $4 \times 2$ integer matrix with no zero rows and no zero columns, where $X=M_X^\text{SACG}$ itself has no loops. Then, there exists a $4 \times 2$ integer matrix $M_Y$ in pre-modified Hermite Normal Form and a signed permutation matrix $P$ and a unimodular matrix $U$ such that $PM_XU=M_Y$.
\end{Prop}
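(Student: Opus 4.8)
The plan is to perform a Hermite-type reduction of $M_X$, using a signed permutation matrix $P$ to permute and change the signs of the rows and a unimodular matrix $U$ to perform column operations, while keeping track of the rows $r_1,r_2,r_3,r_4\in\Z^2$ of $M_X$ modulo $3$. First I would reduce to the case $\operatorname{rank}_{\mathbb Q}M_X=2$: if $M_X$ has rank $1$ then column operations produce a zero column, which is deleted via \cref{thm:delete-zero-columns}, leaving the single-column situation settled by \cref{thm:tomato-cage}; so we may assume $\operatorname{rank}_{\mathbb Q}M_X=2$, and then $M_X$ has no zero columns automatically. The organizing idea is that once we collapse rows $i$ and $i+1$ of a candidate matrix $M_Y$ and column-normalize the resulting $3\times 2$ matrix so its top row is $(y_{11},0)$ with $y_{11}>0$, conditions (1), (2), (4), (5), (6) of modified Hermite Normal Form (\cref{mHNF}) can be met by the remaining row swaps, sign changes, and column-$1$ reductions; what is genuinely constraining is condition (3) --- $3\mid y_{11}$ or $y_{22}\equiv y_{32}\pmod 3$ --- together with the extra divisibility demanded in \cref{def:pMHNF} when $i=1$. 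Write $\bar r\in\F_3^2$ for the reduction of $r$ mod $3$, and recall the eight nonzero elements of $\F_3^2$ split into four lines, each a class together with its negative.

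The case analysis splits on how the classes $\bar r_j$ are arranged. \emph{Case A: some two rows $r_a,r_b$ have $\bar r_a,\bar r_b$ on a common line} (equivalently $\bar r_a=\pm\bar r_b$, allowing $\bar r_a=\bar r_b=0$). Negating $r_b$ if necessary so that $\bar r_a+\bar r_b=0$ and moving these rows to positions $1,2$ via $P$, take $i=1$: the top row of $A_{1,2}(PM_X)$ is $r_a+r_b\equiv 0\pmod 3$, so after normalization $3\mid y_{11}$, which gives condition (3) by its first clause and the $i=1$ requirement of \cref{def:pMHNF} simultaneously; if $r_a+r_b=0$ on the nose, the collapsed matrix has a zero row, the allowed alternative. \emph{Case B: no two rows lie on a common line.} If some $r_d$ has $\bar r_d=0$ --- necessarily the only such row --- put $r_d$ in position $1$, take $i\in\{2,3\}$, and collapse two of the other rows; the top row is then $r_d\equiv 0\pmod 3$, so again $3\mid y_{11}$. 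Otherwise the four rows lie on four distinct nonzero lines; put any row $r_c$ in position $1$, take $i\in\{2,3\}$, and normalize the top row to $(y_{11},0)$ (here $3\nmid y_{11}$ since $\bar r_c\neq 0$). The second-column entry of any other normalized row is then $y_{11}^{-1}\bmod 3$ times the cross product of that row with $r_c$ mod $3$, which is nonzero exactly because that row lies off the line of $\bar r_c$; since any three nonzero elements of $\F_3$ admit signs making two of them sum to the third (e.g.\ $1+1=2$), by flipping signs of rows and choosing which two of the remaining three to collapse we can force $y_{22}\equiv y_{32}\pmod 3$, securing condition (3) by its second clause.

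In every case I would finish with the standard reduction of the $3\times 2$ collapsed matrix to modified Hermite Normal Form --- swap rows $2,3$ and negate column $2$ for conditions (4), (5), then reduce the lower-left entry modulo the $(2,2)$ entry for condition (6) --- checking along the way that these operations preserve whichever clause of condition (3) was secured. It then remains to see that the collapsed matrix is either a genuine $3\times 2$ matrix with no zero rows or columns (hence eligible for mHNF) or has a zero row. In Case B the collapsed matrix has rank $2$: when all four rows sit on distinct nonzero lines the top and retained rows are already independent over $\mathbb Q$, and when one row is $\equiv 0\pmod 3$ the freedom in signs and in the choice of collapsed pair (unconstrained, since condition (3) then comes from $3\mid y_{11}$) makes the collapsed sum independent of the retained row; either way no column vanishes and no row vanishes. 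In Case A the collapsed matrix can drop to rank $1$, and the genuine difficulty is the configurations where it then fails to have a zero row: these must be cleared either by re-routing through a different common-line pair of rows (legitimate by \cref{thm:isomorphisms}) or by producing a standard basis vector in the $\Z$-span of the columns of $M_X$, contradicting the no-loops hypothesis via \cref{prop:loops}. I expect this last point --- reconciling all six mHNF conditions, the mod-$3$ divisibility, and non-degeneracy of the collapsed matrices at once, with rank $2$ and the no-loops hypothesis as the only tools --- to be the main obstacle.
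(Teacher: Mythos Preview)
The paper does not give its own proof of this proposition; it is quoted from the thesis \cite{Tim}, so there is nothing to compare against directly, and I evaluate your argument on its own merits. Your mod-$3$ organisation is the right idea, and Cases~B1 and~B2 go through essentially as written (in B1 your specific claim that the collapsed sum can always be made independent of the retained row is not quite right, but rank~$2$ of the collapsed matrix follows anyway from $\operatorname{rank}_{\mathbb Q}M_X=2$).

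The genuine gap is in Case~A. Your two escape routes from a rank-$1$ collapsed matrix with no zero row---re-routing through another common-line pair, or deriving a loop---do not exhaust the possibilities. Consider
\[
M_X=\begin{pmatrix}2&1\\1&-1\\3&0\\1&0\end{pmatrix},
\]
which has rank $2$, no zero rows or columns, and no loops (no $e_j$ lies in the $\Z$-span of its columns). Modulo $3$ the rows are $(2,1),(1,2),(0,0),(1,0)$; the \emph{only} pair with $\bar r_a=\pm\bar r_b$ is rows $1$ and $2$. Collapsing them with $i=1$ yields the rank-$1$ matrix with rows $(3,0),(3,0),(1,0)$ and no zero row, and neither of your fallbacks applies. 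What rescues this example is that row $3$ is itself $\equiv 0\pmod 3$: placing it in position $1$ and taking $i\in\{2,3\}$---your Case~B1 manoeuvre---succeeds. In general, whenever your $i=1$ collapse in Case~A drops to rank $1$, the rows $r_c,r_d$ become proportional over $\mathbb Q$, and then either $\bar r_c=\pm\bar r_d$ (giving a second common-line pair and a legitimate re-route), or one of $\bar r_c,\bar r_d$ vanishes (and you should switch to the B1 strategy with that row on top). Your case split commits to $i=1$ the moment a common-line pair appears; the fix is to search across all three values of $i$ and both clauses of condition~(3) simultaneously. (A minor aside: your opening reduction ``rank $1\Rightarrow$ delete a column and apply \cref{thm:tomato-cage}'' speaks to the chromatic number, not to the existence of a pre-mHNF representative, which is what the proposition asserts.)
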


In our proof of \cref{thm:main}, then, we will begin by dividing into three cases, according the minimum value of $i$ for which our matrix satisfies \cref{def:pMHNF}.

\section{Proof of \cref{thm:main}}\label{sec:proof-main-theorem}

Part of the proof of \cref{thm:main} can be found in the series \cite{Tim, Kathy, Mason} of master's theses.  In particular, \cite{Tim} proves that \cref{thm:main} holds for every $4\times 2$ integer matrix in Case 1, while \cite{Kathy} and \cite{Mason} prove that it holds for many subcases of Cases 2 and 3.  In this section, we describe how these cases split into subcases as well as how to prove the theorem for the remaining subcases.

First, though, we prove a lemma that will be useful in many of the subcases.  This can be seen as a version of \cref{lem:4-by-2-three-div-row-lemma} for $4\times 2$ matrices.

\begin{Lem}[$4\times 2$ three-divisible row lemma]\label{lem:4-by-2-three-div-row-lemma}
    Let $M_Y$ be a $4\times 2$ integer matrix with associated SACG $Y$ such that $M_Y$ has no zero rows and $Y$ has no loops.  Suppose that some row of $M_Y$ is divisible by $3$.  Then $\chi(Y)\leq 3$.
\end{Lem}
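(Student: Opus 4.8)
The plan is to reduce to the $3\times 2$ Three-Divisible Row Lemma (\cref{tim-3k-lemma}) by ``collapsing'' two of the three rows of $M_Y$ other than the one divisible by $3$. By \cref{homomorphism-theorem}, any such collapse gives a graph homomorphism from $Y$ to the resulting SACG $Y'$, so $\chi(Y)\le\chi(Y')$; and since we never touch the divisible-by-$3$ row, that row survives in the $3\times 2$ matrix $M_{Y'}$. Hence, as long as $M_{Y'}$ has no zero rows and $Y'$ has no loops, \cref{tim-3k-lemma} gives $\chi(Y')\le 3$ and we are done. The whole difficulty is that a collapse may create a zero row (which we would then delete, via \cref{thm:delete-zero-rows}) or a loop (which destroys the hypotheses of \cref{tim-3k-lemma}), so the crux is to show that at least one of the three available collapses avoids both.

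I would first normalize $M_Y$. Permuting rows (\cref{thm:isomorphisms}(4)) puts the divisible-by-$3$ row on top. Unimodular column operations (\cref{thm:isomorphisms}(1)--(3)) leave the graph unchanged, send no nonzero row to a zero row, and preserve divisibility of any row by $3$; combined with the Euclidean algorithm they let me further assume the first row equals $(3g,0)$ for some integer $g\ge 1$. With the first row in this shape, the first coordinate of any integer combination $pc_1+qc_2$ of the columns $c_1,c_2$ of $M_Y$ is $3gp$, which is $0$ exactly when $p=0$. Feeding this into \cref{prop:loops}, I get that $Y$ has no loops iff, writing $(0,b_2,b_3,b_4)^t$ for the second column, for no $l\in\{2,3,4\}$ do we have $|b_l|=1$ with the other two of $b_2,b_3,b_4$ equal to $0$. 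If $b_2=b_3=b_4=0$, then the second column is entirely zero; deleting it (\cref{thm:delete-zero-columns}) leaves a $4\times 1$ matrix, which cannot be $\pm e_i$ since $Y$ has no loops, so the Tomato Cage Theorem (\cref{thm:tomato-cage}) gives $\chi(Y)\le 3$. So from now on some $b_l\ne 0$.

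Now the key claim: among the three collapses of a pair of rows drawn from $\{2,3,4\}$, at least one yields a $3\times 2$ matrix, still with the divisible row on top, that has no zero row and whose SACG has no loops. Collapsing rows $i,j$ with $m$ the remaining index, the same use of \cref{prop:loops} shows the new SACG has a loop iff either $b_m=0$ and $|b_i+b_j|=1$, or $b_i+b_j=0$ and $|b_m|=1$, while the new matrix has a zero row iff $(a_i+a_j,b_i+b_j)=(0,0)$; call the pair \emph{bad} if either occurs. One then splits on how many of $b_2,b_3,b_4$ vanish. When all are nonzero, any pair with $b_i+b_j\ne 0$ is not bad, and if every pair were bad then $b_2+b_3=b_2+b_4=b_3+b_4=0$, forcing $b_2=0$, a contradiction. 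When exactly one vanishes, say $b_4=0$, the pair $\{2,4\}$ is not bad: a zero row would need $b_2=0$, the first loop possibility needs $b_3=0$, and the second needs $b_2=0$. When exactly two vanish, say $b_3=b_4=0$, the no-loops condition on $Y$ forces $|b_2|\ge 2$, and then the pair $\{2,3\}$ is not bad: a zero row needs $b_2=0$, the first loop possibility needs $|b_2+b_3|=|b_2|=1$, and the second needs $b_2+b_3=b_2=0$ --- all impossible. In every case, collapsing a non-bad pair and applying \cref{tim-3k-lemma} gives $\chi(Y)\le 3$.

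I expect the main obstacle to be precisely the last paragraph: one has to read off from \cref{prop:loops} exactly the right consequence of ``$Y$ has no loops'' --- the statement about an isolated $\pm 1$ entry in the normalized second column --- and then check that it prevents all three collapses from being bad at once. Everything else, namely the normalizations and the translation of the loop and zero-row tests for a collapse into conditions on the second column, is routine once set up.
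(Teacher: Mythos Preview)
Your proof is correct and complete, but it follows a genuinely different route from the paper's. You normalize the divisible-by-$3$ row to $(3g,0)$ via unimodular column operations, which makes the loop test (\cref{prop:loops}) for both $Y$ and any collapsed $3\times 2$ quotient completely explicit in terms of the entries $b_2,b_3,b_4$ of the second column; you then run a short finite case analysis on how many of the $b_l$ vanish to locate one of the three ``add'' collapses of rows in $\{2,3,4\}$ that is neither looped nor has a zero row. The paper, by contrast, does not normalize: it works with the general matrix, collapses only the single pair of rows $\{3,4\}$ but in two ways (adding and subtracting), and argues by contradiction. Assuming $\chi(Y)>3$, both collapsed graphs must be looped, and the two loop equations force the coefficient vectors $(\alpha_1,\beta_1)$ and $(\alpha_2,\beta_2)$ to lie in the one-dimensional orthogonal complement of the nonzero top row; coprimality then forces $(\alpha_2,\beta_2)=\pm(\alpha_1,\beta_1)$, after which adding and subtracting the loop equations produces a loop in $Y$ itself. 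Your normalization buys a more elementary and transparent case split at the cost of a preliminary reduction step; the paper's orthogonality argument avoids the reduction but trades it for more linear algebra and a longer endgame case analysis on $(i_1,i_2)$.
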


\begin{proof}
    Write \[M_Y=\begin{pmatrix}
        y_{11} & y_{12}\\
        y_{21} & y_{22}\\
        y_{31} & y_{32}\\
        y_{41} & y_{42}
    \end{pmatrix}.\]Assume without loss of generality that $3$ divides the top row of $M_Y$, that is, $3\,\mid\,y_{11}$ and $3\,\mid\,y_{12}$.

By \cref{homomorphism-theorem} we have that \[(M_Y)^\text{SACG}\xrightarrow{\ocirc}\begin{pmatrix}
    y_{11} & y_{12}\\
    y_{21} & y_{22}\\
    y_{31}+y_{41} & y_{32}+y_{42}\\
\end{pmatrix}^\text{SACG}=Z_1,\]where $M_{Z_1}$ is the latter matrix.  For sake of contradiction, assume that the statement ``$\chi(Y)\leq 3$'' fails.  (This includes the possibility that $Y$ has loops.)

By \cref{tim-3k-lemma}, either $M_{Z_1}$ has a zero row or $Z_1$ has loops.

Let's first say that $M_{Z_1}$ has a zero row.  Because $M_Y$ has no zero rows, it must be that the third row of $M_{Z_1}$ is a zero row.  That is, $y_{41}=-y_{31}$ and $y_{42}=-y_{32}$.  By \cref{thm:isomorphisms} and \cref{homomorphism-theorem} (namely, multiply the bottom row by $-1$, then merge the bottom two rows by adding), we have a homomorphism\[(M_Y)^\text{SACG}\xrightarrow{\ocirc}\begin{pmatrix}
    y_{11} & y_{12}\\
    y_{21} & y_{22}\\
    2y_{31} & 2y_{32}\\
\end{pmatrix}^\text{SACG}=Z_2,\]where $M_{Z_2}$ is the latter matrix.  Again by \cref{tim-3k-lemma}, we get that $M_{Z_2}$ has a zero row, or else $Z_2$ has loops.  However, $M_{Z_2}$ cannot have a zero row, else $M_Y$ would have a zero row.  So $Z_2$ has loops.  By \cref{prop:loops}, therefore $\alpha c_1+\beta c_2=e_i$, where $\alpha, \beta\in\mathbb{Z}$ and $c_j$ is the $j$th column of $M_{Z_2}$ and $i\in\{1,2,3\}$.  Note that the bottom row of $M_{Z_2}$ is divisible by $2$, so we cannot have $i=3$.  But then $\alpha d_1+\beta d_2=e_i$, where $d_j$ is the $j$th column of $M_Y$.  So $Y$ has loops, which is a contradiction.

\vspace{.1in}

Hence we have established that $Z_1$ has loops.  By \cref{prop:loops}, therefore \begin{equation}\label{eq:Z-1}
    \alpha_1\begin{pmatrix}
        y_{11}\\
        y_{21}\\
        y_{31}+y_{41}
\end{pmatrix}+\beta_1\begin{pmatrix}
        y_{12}\\
        y_{22}\\
        y_{32}+y_{42}
    \end{pmatrix}=e_{i_1}
\end{equation}for some $i_1\in\{2,3\}$ and for some $\alpha_1,\beta_1\in\mathbb{Z}$.  (Note that we cannot have $i_1=1$, by our assumption that $3$ divides $y_{11}$ and $y_{12}$.)

Playing the same game by subtracting rows instead of adding them, we find that we have a homomorphism\[(M_Y)^\text{SACG}\xrightarrow{\ocirc}\begin{pmatrix}
    y_{11} & y_{12}\\
    y_{21} & y_{22}\\
    y_{31}-y_{41} & y_{32}-y_{42}\\
\end{pmatrix}^\text{SACG}=Z_2,\]where $M_{Z_2}$ is the latter matrix.  Along the same lines of our earlier argument, we find that $M_{Z_2}$ has no zero rows, and thus by \cref{tim-3k-lemma} and \cref{prop:loops}, we have that \begin{equation}\label{eq:Z-2}
    \alpha_2\begin{pmatrix}
        y_{11}\\
        y_{21}\\
        y_{31}-y_{41}
\end{pmatrix}+\beta_2\begin{pmatrix}
        y_{12}\\
        y_{22}\\
        y_{32}-y_{42}
    \end{pmatrix}=e_{i_2}
\end{equation}for some $i_2\in\{2,3\}$ and for some $\alpha_2,\beta_2\in\mathbb{Z}$.

Because $M_Y$ has no zero rows, therefore $(y_{11}, y_{12})$ is not the zero vector.  Equations (\ref{eq:Z-1}) and (\ref{eq:Z-2}) then imply that $(\alpha_1,\beta_1)$ and $(\alpha_2,\beta_2)$ both lie in the (one-dimensional) orthogonal complement of $(y_{11}, y_{12})$ in $\mathbb{Q}^2$.  So \begin{equation}\label{eq:alphas-betas}
    \alpha_1\beta_2=\alpha_2\beta_1.
\end{equation}

Equation (\ref{eq:Z-1}) shows that some linear combination of $\alpha_1$ and $\beta_1$ with integer coefficents equals 1.  Likewise with Equation (\ref{eq:Z-2}).  So $\text{gcd}(\alpha_1,\beta_1)=\text{gcd}(\alpha_2,\beta_2)=1$.

From this and (\ref{eq:alphas-betas}) we get that either $\alpha_1=\alpha_2$ and $\beta_1=\beta_2$, or else $\alpha_1=-\alpha_2$ and $\beta_1=-\beta_2$.

Suppose that $\alpha_1=\alpha_2$ and $\beta_1=\beta_2$.  (The case where $\alpha_1=-\alpha_2$ and $\beta_1=-\beta_2$ will be similar and hence is omitted.)  We divvy now into cases according to the values of $i_1$ and $i_2$.

\guillemetright\, Suppose $i_1=i_2=2$.  Adding (\ref{eq:Z-1}) and (\ref{eq:Z-2}) gives us that $\alpha_1 y_{31}+\beta_1 y_{32}=0$.  Subtracting (\ref{eq:Z-1}) and (\ref{eq:Z-2}) gives us that $\alpha_1 y_{41}+\beta_1 y_{42}=0$.  With (\ref{eq:Z-1}), then, that gives us that $\alpha_1 d_1+\beta_1 d_2=e_2$, where $d_j$ is the $j$th column of $M_Y$.  But then $Y$ has loops, which is a contradiction.

\guillemetright\, Suppose that either $i_1=2$ and $i_2=3$, or else $i_1=3$ and $i_2=2$.  Adding (\ref{eq:Z-1}) and (\ref{eq:Z-2}) gives us that $2\alpha_1 y_{31}+2\beta_1 y_{32}=1$, which is a contradiction.

\guillemetright\, Suppose $i_1=i_2=3$.  Adding (\ref{eq:Z-1}) and (\ref{eq:Z-2}) gives us that $\alpha_1 y_{31}+\beta_1 y_{32}=1$.  Subtracting (\ref{eq:Z-1}) and (\ref{eq:Z-2}) gives us that $\alpha_1 y_{41}+\beta_1 y_{42}=0$.  With (\ref{eq:Z-1}), then, that gives us that $\alpha_1 d_1+\beta_1 d_2=e_3$, where $d_j$ is the $j$th column of $M_Y$.  But then $Y$ has loops, which is a contradiction.\end{proof}

In Cases 2 and 3, therefore, we can assume that no row of our original $4\times 2$ matrix $M_Y$ is divisible by $3$.  Moreoever, we can assume that there does not exist a pair of rows in $M_Y$ whose sum or difference is divisible by $3$; otherwise, we would be in Case 1.

If we are in Case 2, let \[M_Z=\begin{pmatrix}
    y_{11} & y_{12}\\
    y_{21}+y_{31} & y_{22}+y_{32}\\
    y_{41} & y_{42}
\end{pmatrix}.\]If we are in Case 3, let \[M_Z=\begin{pmatrix}
    y_{11} & y_{12}\\
    y_{21} & y_{22}\\
    y_{31}+y_{41} & y_{32}+y_{42}
\end{pmatrix}.\]

In either case, we have that $M_Z$ is in modified Hermitian normal form.  In either case, let $Z=(M_Z)^\text{SACG}$.  To prove \cref{thm:main}, we assume $\chi(Y)\geq 4$.  Because there is a graph homomorphism $M_Y\xrightarrow{\ocirc} M_Z$, therefore $\chi(Z)\geq 4$.  By \cref{chi-of-mHNF}, we have that either $Z$ has loops, or else $M_Z$ equals one of the six exceptional cases in that theorem with chromatic number $4$.

We can reduce the number of cases to consider.  Observe that each of the first four exceptional cases can be tranformed into the sixth by appropriate row and column operations.  For example, for the matrix \[\begin{pmatrix}
    1 & 0\\
    0 & 1\\
    3k & 1+3k
\end{pmatrix},\]which corresponds to (i), we find that adding $-1$ times the second column to the first column produces a new matrix in the form (vi).  Such row and column operations can be ``absorbed'' into the matrices $P$ and $U$ we seek to find.  Thus, we need not consider exceptional cases (i), (ii), (iii), and (iv).

Moreover, by \cref{chi-of-mHNF}(1), we have that $Z$ has loops if and only if the first column of $M_Z$ is $e_1$, or the second column is $e_3$.  We claim that the latter possibility cannot occur.  For suppose otherwise.  The top row of $M_Z$ coincides with the top row of $M_Y$ and hence is not divisible by $3$.  The top right entry of $M_Z$ is $0$, because it is the first entry in $e_3$.  So the top left entry of $M_Z$ is not divisible by $3$.  From \cref{mHNF}, it follows that the middle left and bottom left entries of $M_Z$ are congruent modulo $3$.  Hence the second column of $M_Z$ cannot be $e_3$.

So far we have reduced down to six sub-cases: For each of Cases 2 and 3, we consider the possibilities where the first column of $M_Z$ is $e_1$; where $M_Z$ is in exceptional case (v) from \cref{chi-of-mHNF}; and where $M_Z$ is in exceptional case (vi) from \cref{chi-of-mHNF}.

We can consolidate (indeed, eliminate) cases further still.  For the situation where we are in Case 3 and the first column of $M_Z$ is $e_1$, \cref{thm:main} is proven in \cite{Mason}.  A careful review of that proof shows that conditions (4), (5), and (6) from \cref{mHNF} were never used there.  The remaining conditions are symmetric with respect to transposition of the bottom two rows.  Hence if we are in the situation where we are in Case 2 and the first column of $M_Z$ is $e_1$, this can be transformed into the same situation for Case 3 by swapping rows 2 and 4 of $M_Y$.  This row swap corresponds to multiplication on the left by a permutation matrix, which can be ``absorbed'' into the matrix $P$ from \cref{thm:main}.

A similar line of reasoning shows that exceptional case (vi) from \cref{chi-of-mHNF} does not have to be considered separately for Cases 2 and 3.  That's because in our proof of \cref{thm:main} for the situation where we are in Case 3 and $M_Z$ is in exceptional case (vi) from \cref{chi-of-mHNF}, at no point do we use that $k>0$.  Thus, as in the previous paragraph, we may swap rows $2$ and $4$ of $M_Y$ to get from Case 2 to Case 3.

There are, then, only three sub-cases remaining to consider: Case 2, when $M_Z$ is in exceptional case (v) from \cref{chi-of-mHNF}; Case 3, when $M_Z$ is in exceptional case (v) from \cref{chi-of-mHNF}; and Case 3, when $M_Z$ is in exceptional case (vi) from \cref{chi-of-mHNF}.  In the last of these, we must be careful to avoid using the information that $k>0$.

The techniques used to prove \cref{thm:main} for these three sub-cases are quite similar for each, and to present all of them would be tedious in the extreme.  Consequently, we present only one.  Armed with these methods, the reader then can prove the remaining ones, if desired.

% NOW GO AHEAD AND DO ALL THAT (JUST PROVE SOME REPRESENTATIVE SUBCASES...)

\section{Case 3, when $M_Z$ is in exceptional case (v)}

\[
\begin{pmatrix}
    1 & 0 \\
    0 & -1 \\
    y_{31} & y_{32} \\
    y_{41} & 2- y_{32}
\end{pmatrix}_{Y} 
\xrightarrow{\ocirc}
\begin{pmatrix}
    1 & 0 \\
    0 & -1 \\
    \pm3k & 2
\end{pmatrix}_{Z} 
\]

We can start by considering the congruences of each of the variables modulo 3. There are many restrictions that apply here, so for some $\epsilon = \pm1$, we find that the only possible reduction of $Y$ modulo 3 is as follows: 

\[
\begin{pmatrix}
    1 & 0 \\
    0 & -1 \\
    \epsilon & 1 \\
    -\epsilon & 1
\end{pmatrix}_{Y \textrm{ mod 3}} 
\cong
\begin{pmatrix}
    1 & 0 \\
    0 & -1 \\
    1 & 1 \\
    -1 & 1
\end{pmatrix}_{Y \textrm{ mod 3}}
\]

Now we can safely assume the congruence of each variable modulo 3. This allows us to construct new homomorphisms that can be used to verify \cref{thm:main}. Here is one potential homomorphism to start: 

\[
\begin{pmatrix}
    1 & 0 \\
    0 & -1 \\
    y_{31} & y_{32} \\
    y_{41} & 2- y_{32}
\end{pmatrix}_{Y} 
\xrightarrow{\ocirc}
\begin{pmatrix}
    1 & 0 \\
    -y_{31} & -1-y_{32} \\
    y_{41} & 2-y_{32}
\end{pmatrix}_{U} 
\]

This $M_U$ was constructed in such a way as to ensure that the first three conditions of modified Hermite Normal Form are met. When these conditions are met, there is a standard methodology that can be applied to find the cases where $\chi(U)=4$. First though, we'll consider the possibility that $U$ has loops by applying \cref{prop:loops}. In other words, we seek solutions to this system of equations:

$$\alpha \begin{pmatrix}
1 \\
-y_{31}  \\
y_{41} 
\end{pmatrix} + \beta \begin{pmatrix}
0 \\
-1-y_{32}  \\
2-y_{32} 
\end{pmatrix} = e_i$$

for some $i\in\{1,2,3\}$. Considering the congruence of each of the variables, we are able to immediately eliminate all possibilities except for the $i=1$ case. Here is one equation we can pull from that system: $$3 y_{41}-(y_{31}+y_{41})(2-y_{32})=0$$

We'll come back to this, but for now, we move on to identifying the cases where $\chi(U)=4$. \\

In order to do this, we will attempt to put $M_U$ into modified Hermite Normal Form. We can ensure the fourth and fifth conditions are met by multiplying the second column by $\pm 1$ and/or swapping rows. To ensure the sixth condition is met, we can subtract an integer multiple of the second column from the first and multiply the first row and column by $\pm 1$. Therefore for some $q \in \Z$ and $\epsilon = \pm1$, one of the following two matrices (each of which corresponds to a graph isomorphic to $U$) is in modified Hermite Normal Form:   

\[
\begin{pmatrix}
    1 & 0 \\
    \epsilon[-y_{31}-q(-1-y_{32})] & -1-y_{32} \\
    \epsilon[y_{41}-q(2-y_{32})] & 2-y_{32}
\end{pmatrix}_{U_1} 
\textrm{ or  } \
\begin{pmatrix}
    1 & 0 \\
    \epsilon[y_{41}-q(-2+y_{32})] & -2+y_{32} \\
    \epsilon[-y_{31}-q(1+y_{32})] & 1+y_{32}
\end{pmatrix}_{U_2}
\]

Comparing $M_{U_1}$ and $M_{U_2}$ to the six exceptional cases from \cref{chi-of-mHNF} will give all possible sub-cases where $\chi(U)=4$. We are able to quickly rule out most possibilities by finding some contradiction with the congruence of the variables, or by finding that $M_Y$ has a tri-triangle (for example, when $y_{32}=1$). The sub-cases that are not so quickly addressed are as follows: 

\begin{enumerate}
    \item $U$ has loops: \\
    $3 y_{41}-(y_{31}+y_{41})(2-y_{32})=0$ 
    \item $U_1$ belongs to case (i): \\
    $y_{32} = -2$ and $y_{41} = \pm3-4y_{31}$ 
    \item $U_1$ or $U_2$ belong to case (vi): \\
    $3y_{41}-(y_{31}+y_{41})(2-y_{32})=\pm3$
    \item $U_2$ belongs to case (iii): \\
    $y_{32}=4$
\end{enumerate}

There are a number of approaches we can take to address the remaining sub-cases. One option is to construct another homomorphism, repeat this process, and then consider the cases where \textit{both} homomorphisms fail to quickly give an upper bound for $\chi(Y)$. We will proceed with this method and call the new matrix $M_V$:

\[
\begin{pmatrix}
    1 & 0 \\
    0 & -1 \\
    y_{31} & y_{32} \\
    y_{41} & 2- y_{32}
\end{pmatrix}_{Y} 
\xrightarrow{\ocirc}
\begin{pmatrix}
    1 & 0 \\
    0 & -1 \\
    y_{31}-y_{41} & 2y_{32}-2
\end{pmatrix}
\cong\begin{pmatrix}
    1 & 0 \\
    0 & -\epsilon \\
    2y_{32}-2 & \epsilon (y_{31}-y_{41})
\end{pmatrix}_{V} 
\]

Here we are again letting $\epsilon=\pm1$. The process is much simpler for $M_V$ since the entries in the middle row are essentially known. We are able to quickly rule out the loops case and find only two sub-cases that can not be quickly addressed: 

\begin{enumerate}
    \item $M_V$ belongs to case (i) or (ii): \\
    $- y_{31}+y_{41}-1=\pm(2 y_{32}-2)$
    \item $M_V$ belongs to case (v): \\
    $y_{41}=y_{31}-2$
\end{enumerate}

We can now cross-reference our two lists of sub-cases (which gives a total of eight possibilities) and verify \cref{thm:main} applies for each of them individually. In general, we try to avoid proliferating our sub-cases.  However, here we find the reduction in complexity to be worth the increased number of cases here. Most of these new sub-cases reduce to the point of being trivial.  We will address a few of the more complex ones. \\

\textbf{$U$ has loops, and $M_V$ belongs to case (i) or (ii):}

We can combine the two associated equations to find: $$6y_{41}-(y_{31}+y_{41})(2\pm2(-y_{31}+y_{41}-1))=0$$

The unknown sign in the equation gives two possible equations that can be evaluated. Using the reduction theory of quadratic forms as in \cite{Jarvis}, we will find the integer solutions to these equations and list only solutions that are consistent with the variables' congruence modulo three:

\begin{center}
    \begin{tabular}{c|c}
    Equation & Integer Solutions ($y_{31},y_{41}$) \\ \hline
    $2y_{31}^2+6y_{41}-2y_{41}^2=0$ & $(-2,-1)$ \\
    $-y_{31}^2-3y_{31}+3 y_{41}+y_{41}^2=0$ & $y_{31}=-y_{41}-3$
\end{tabular}
\end{center}

% The second equation listed can be evaluated using straightforward algebraic methods. However, it is not immediately apparent that the first equation has finitely many integer points.

If we have $y_{31}=-2$ and $y_{41}=-1$, every entry is known and it's straightforward to verify the result of \cref{thm:main}. If we have $y_{31}=-y_{41}-3$, we can substitute this into the previous system of equations (since we should still be in the case where $U$ has loops) and verify that the system has no solutions. So we have a contradiction and are done with this case. The sub-case where $U$ has loops and $M_V$ belongs to case (v) is similar. \\

\textbf{$U_2$ belongs to case (iii):} \\

Here we find that $M_V$ is a little difficult to incorporate, so we'll construct a new homomorphism entirely: 

\[
\begin{pmatrix}
    1 & 0 \\
    0 & -1 \\
    y_{31} & 4 \\
    y_{41} & -2 
\end{pmatrix}_{Y} 
\xrightarrow{\ocirc}
\begin{pmatrix}
    1 & 0 \\
    4 & y_{31} \\
    -2 & y_{41}-1
\end{pmatrix}_{W} 
\]

We can follow the same process we used for $M_U$ to address the sub-cases where $W$ has loops or where $\chi(W)=4$. This involves solving a number of relatively straightforward Diophantine equations. One additional tool that is helpful here is noting that sub-cases where $y_{31}=-y_{41}$ can be immediately disregarded. This is because $y_{31}=-y_{41}$ implies that $Z$ has loops, and that case was previously evaluated in \cite{Mason}. The reader can verify this result, as well as the remaining sub-cases. 

\bibliographystyle{amsplain}
\bibliography{4-by-2}

\end{document}